\newtheorem{theorem}{Theorem}[section]
\newtheorem{lemma}[theorem]{Lemma}
\newtheorem{corollary}[theorem]{Corollary}
\newtheorem{proposition}[theorem]{Proposition}
\theoremstyle{definition}
\newtheorem{definition}[theorem]{Definition}
\newtheorem{example}[theorem]{Example}
\theoremstyle{remark}
\newtheorem*{remark}{Remark}
\numberwithin{equation}{section}
\begin{document}

\title{Geometric Hardy Inequalities via Integration on Flows}

\author{M. Paschalis}
\address{Department of Mathematics, University of Athens}

\email{mpaschal@math.uoa.gr}

\subjclass[2000]{Primary 35J75, 58J05; Secondary 58J05, 35R01, 35J62}


\keywords{Hardy Inequality; Directional derivative; Vector field; Flow; Manifold.}

\begin{abstract}
We introduce a geometric approach of integration along integral curves for functional inequalities involving directional derivatives in the general context of differentiable manifolds that are equipped with a volume form. We focus on Hardy-type inequalities and the explicit optimal Hardy potentials that are induced by this method. We then apply the method to retrieve some known inequalities and establish some new ones.
\end{abstract}

\maketitle

\section{Introduction}

We begin by providing some background on Hardy inequalities. The classic $L^p$ Hardy inequality in $\mathbb{R}^N$ reads $$\int_{\mathbb{R}^N} |\nabla \varphi(x)|^p dx \geq \bigg| \frac{p-N}{p} \bigg|^p \int_{\mathbb{R}^N} \frac{|\varphi(x)|^p}{|x|^p} dx, \ \ \ \varphi \in C^1_c(\mathbb{R}^N \setminus \{ 0 \}),$$ and has important applications in the theory of PDEs involving singular potentials. Without surprise, the Euclidean case of this type of inequality and its spin-offs has been studied extensively for several decades. For an extensive reference on Hardy inequalities, see \cite{BEL}.

A type of Hardy inequality that has attracted a lot of interest lately is one that involves the distance from the boundary of a domain $U\subset \mathbb{R}^N$. In particular, such a result should read $$\int_U |\nabla \varphi(x)|^p dx \geq c \int_U \frac{|\varphi(x)|^p}{d(x)^p} dx, \ \ \ \varphi \in C^1_c(U),$$ where $d(x) = \textrm{dist}(x, \partial U)$ and $c$ is the optimal positive constant (if any) for which the inequality is valid. Such results are known to exist if $U$ is convex or when $U$ is bounded and has Lipschitz boundary, for example. In particular, the one-dimensional case for the interval $(a,b)$ reads $$(\star) \ \ \ \int_a^b |\varphi'(x)|^p dx \geq \bigg( \frac{p-1}{p} \bigg)^p \int_a^b \frac{|\varphi(x)|^p}{\min \{ x-a, b-x \}^p} dx, \ \ \ \varphi\in C^1_c((a,b)).$$

In the present article, we propose a method of integration along integral curves to obtain a ``lifting" of this inequality for differentiable manifolds of arbitrary dimension that are subject to a simple geometric condition that is satisfied in a large number of cases. In particular, if $M$ is an oriented differentiable manifold with positive volume form $\omega$ and $X$ is a non-vanishing vector field on $M$, we prove the optimal inequality $$\int_M |X\varphi|^p \omega \geq \bigg( \frac{p-1}{p} \bigg)^p \int_M \frac{|\varphi|^p}{\tau_p^p} \omega, \ \ \ \varphi \in C^1_c(M),$$ where $\tau_p$ is a suitable ``boundary distance" that depends on the geometry of the configuration. It is worth noting that in our method $\tau_p$ is calculated explicitly and is usually highly non-trivial, except for the simplest of cases.

It has been recently pointed out to us by Y. Pinchover that a special case of this approach also appears in \cite{MMP}, where the authors integrate with respect to ``flow coordinates'' in bounded $C^2$ Euclidean domains to specify some properties of the Hardy constant that corresponds to the Euclidean distance, amongst other things. In this respect, our work could be considered to be a generalisation of this methodology in a broader context.

Although our results apply more generally, of special interest is the case of a Riemannian manifold $(M,g)$, where we can apply the method to retrieve inequalities involving the Riemannian gradient $\nabla_g$ and the associated volume form $\omega_g$. Our method can easily provide optimal, non-trivial Hardy potentials in a multitude of such cases, as we demonstrate through specific examples.

\section{Preliminaries}

We begin by setting the context and introducing the necessary notions that will be used throughout the rest of this work. 

\begin{definition}
Let $M$ be a smooth manifold of dimension $N$.
\begin{enumerate}
\item A non-vanishing vector field $X\in \Gamma(TM)$ is called a \textit{direction field} on $M$. The pair $(M,X)$ is then called a \textit{directed space}.

\item A non-vanishing $N$-form $\omega\in \Lambda^N(T^*M)$ is called a \textit{volume form} on $M$.

\item A triple $(M,X,\omega)$ that consists of a smooth manifold, a direction field and a volume form is called a \textit{directed volume space}.
\end{enumerate}
\end{definition}

In what follows and unless otherwise stated, $M$ will stand for a non-compact, oriented smooth manifold of dimension $N$, $X$ will be a direction field and $\omega$ will be a volume form on $M$. Hereafter, we will also make the implicit assumption that $\omega$ is positive in the chosen orientation.

As usual, an integral curve on the directed space $(M,X)$ will be a curve $\gamma:I\rightarrow M$ such that $\gamma' = X \circ \gamma$. By the existence and uniqueness theorem for ODEs, for each point $z\in M$, there exists a unique maximal integral curve $\gamma_z:I_z\rightarrow M$ such that $\gamma_z(0)=z$. The flow of $X$ is then defined to be the smooth map $$\theta: \bigsqcup_{z \in M} I_z \rightarrow M, \ \theta(z,t) = \gamma_z(t).$$

The directed space $(M,X)$ is said to be complete if $I_z=\mathbb{R}$ for all $z\in M$. The type of spaces that will occupy our attention are essentially the opposite of complete spaces in the following sense.

\begin{definition}
A directed space is said to be \textit{traceable} if $I_z \subsetneqq \mathbb{R}$ for all $z\in M$.
\end{definition}

To get an intuitive understanding of this definition, consider the one-point compactification of $M$ with $\infty$ being the point at infinity. Traceable spaces are exactly the ones in which starting at any point and following the flow of the field will take one to $\infty$ at finite time in at least one direction (positive or negative time).

Traceable spaces are important for our purposes because one can naturally define a temporal distance function from infinity: if $z\in M$ is a point, define $$\tau(z)=\textrm{dist}(0,\partial I_z).$$ Then $\tau: M \rightarrow \mathbb{R}$ is obviously well-defined and positive everywhere in the manifold, and its value at any point is equal to the time required to reach infinity if one follows the flow of the field starting from that point.

Each directed space $(M,X)$ comes naturally equipped with an equivalence relation $\sim$ that takes two points to be equivalent if they belong to the same integral curve. The resulting quotient space, which we denote by $M/X$, is called the \textit{orbit space} of $(M,X)$, and in general fails to be a manifold. We will be interested in subsets of $M$ that are saturated with respect to this relation.

\begin{definition}
Let $(M,X)$ be a directed space.
\begin{enumerate}
\item A subset $S \subset M$ is said to be \textit{saturated} if $\textrm{Im} (\gamma_z) \subset S$ for all $z\in S$.
\item If $S\subset M$ is any subset, we define the \textit{saturation of $S$} to be the set $$\theta(S)= \bigcup_{z\in S} \textrm{Im}(\gamma_z).$$
\end{enumerate}
\end{definition}
In other words, if a saturated subset $S$ contains a point then it contains the entire integral curve that point belongs to. Obviously, $S$ is saturated if and only if $S=\theta(S)$. Moreover, since the flow is an open map, if $S$ is open, so is $\theta(S)$.

In each directed space, one can introduce, at least locally, a set of normal coordinates $\chi=(t,s)=(t,s^1,\ldots,s^{N-1})$ with the property $\partial / \partial t = X$. In terms of the corresponding parametrisation $\zeta = \chi^{-1}$, this can be expressed equivalently as $$\partial_t \zeta(t,s) = X \circ \zeta(t,s).$$ Actually, this means that $\zeta$ forms a family of integral curves parametrised by $s$. While it is incorrect to assume that every directed space can be covered by a single normal coordinate chart, it is obvious that one always has an open cover of the manifold consisting of saturated normal chart domains (to see this, for each point $z\in M$, pick a normal coordinate ball $B$ centered at $z$ and consider $\theta(B)$).

In normal coordinates, $\omega$ admits a local expression $$\omega = \Omega(t,s)dt\wedge ds,$$ with $\Omega$ being the \textit{local volume density} in these coordinates. In general, $\Omega$ depends both on $s$ and $t$. Directed volume spaces in which $\Omega$'s don't depend on $t$ form a special class which is much easier to deal with for our purposes, so we give them a name.

\begin{definition}
A directed volume space $(M,X,\omega)$ is called \textit{simple} if the local volume density of $\omega$ in normal coordinates is independent of $t$.
\end{definition}

We will develop a method of obtaining Hardy inequalities for directed volume spaces regardless of whether they are simple or not. In fact, the most interesting cases are usually non-simple. However, simple spaces, as we will see shortly, are much easier to deal with and are the natural starting point for our line of work.

\section{The simple case}

First we deal with simple spaces. The derivation of a Hardy inequality is much simpler in that case, and sets the background for the more advanced techniques that are required to treat the general case.

Intuitively, the method we develop can be described as follows:
\begin{enumerate}
\item Cover the space with saturated normal coordinate charts. This way we can ``write down" the space as a parametrised family of integral curves.
\item Apply the one-dimensional Hardy inequality $(\star)$ along each curve separately.
\item Integrate over all integral curves using the normal coordinates.
\end{enumerate}

At this point, we are ready to state and prove the main theorem of this section.

\begin{theorem}
\label{Th1}
Let $(M,X,\omega)$ be a simple and traceable directed volume space. Then the inequality
\begin{equation}
\int_M |X\varphi|^p \omega \geq \bigg( \frac{p-1}{p} \bigg)^p \int_M \frac{|\varphi|^p}{\tau^p} \omega, \ \ \ \varphi \in C^1_c(M)
\label{H1}
\end{equation}
holds for all $p>1$.
\end{theorem}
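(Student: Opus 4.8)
The plan is to realise $(M,X,\omega)$ as a parametrised family of integral curves and to reduce the inequality to the one-dimensional estimate $(\star)$ applied on each curve. First I would fix a saturated normal coordinate chart $\chi=(t,s)=(t,s^1,\dots,s^{N-1})$, so that $X=\partial/\partial t$ and the chart has the form of a flow tube $\{(t,s): s\in V,\ t\in(\alpha_s,\beta_s)\}$, each slice $s=\mathrm{const}$ being the image of a maximal integral curve, i.e. $(\alpha_s,\beta_s)=I_{\zeta(0,s)}$. Writing $\tilde\varphi=\varphi\circ\zeta$, the directional derivative becomes $X\varphi=\partial_t\tilde\varphi$, and since the space is \emph{simple} the density satisfies $\Omega=\Omega(s)$. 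Hence, by Tonelli's theorem,
$$\int |X\varphi|^p\,\omega = \int_V \Big( \int_{\alpha_s}^{\beta_s} |\partial_t\tilde\varphi(t,s)|^p\, dt \Big)\,\Omega(s)\, ds,$$
and the crucial point is that the inner measure $dt$ along each curve is plain flow-time Lebesgue measure, with $\Omega(s)$ factoring out of the $t$-integration. This is precisely where simplicity is used; in the non-simple case $\Omega$ would depend on $t$ and the inner integral would carry a genuine weight, which is why that situation needs the heavier machinery promised later.

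Next I would run the one-dimensional estimate fibrewise. For fixed $s$ the function $\tilde\varphi(\cdot,s)$ lies in $C^1_c((\alpha_s,\beta_s))$, so $(\star)$ applies with $a=\alpha_s$ and $b=\beta_s$ (together with its half-line version when one endpoint is infinite, which traceability permits). The weight appearing in $(\star)$ is $\min\{t-\alpha_s,\beta_s-t\}$, and I would identify this with $\tau$: since the maximal interval shifts as $I_{\zeta(t,s)}=(\alpha_s-t,\beta_s-t)$ under the flow, one gets $\tau(\zeta(t,s))=\mathrm{dist}(0,\partial I_{\zeta(t,s)})=\min\{t-\alpha_s,\beta_s-t\}$, exactly the one-dimensional distance to the endpoints. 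Multiplying the fibrewise inequality by $\Omega(s)$ and integrating over $V$ then yields the desired inequality for every $\varphi$ supported in a single saturated chart.

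The main obstacle is the passage from this local statement to a global one. Because $|X\varphi|^p$ is nonlinear in $\varphi$, one cannot simply decompose $\varphi$ by a partition of unity subordinate to a cover of the compact set $\mathrm{supp}\,\varphi$ by saturated charts and add the resulting inequalities: the cross terms $(X\rho_i)\,\varphi$ spoil the left-hand side and the weights $\sum_i\rho_i^p$ fail to reconstitute the clean integrals. The clean way around this is to observe that the computation above is nothing but a disintegration of the measure $\omega$ along the globally defined flow orbits: on overlaps the local pictures agree because they describe the same measure, so $\int_M h\,\omega$ equals the integral over the orbit space of the flow-time integral of $h$ along each orbit. The fibrewise inequality from the previous step holds on every orbit with the \emph{same} constant $\left(\frac{p-1}{p}\right)^p$, and both integrands are nonnegative, so integrating against the orbit-space measure and invoking monotonicity of the integral produces \eqref{H1} globally. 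I expect the technical heart of the write-up to be making this disintegration rigorous given that the orbit space need not be a manifold; compactness of $\mathrm{supp}\,\varphi$, which confines all integration to finitely many saturated tubes, is what keeps this manageable.
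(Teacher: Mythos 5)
Your local argument coincides exactly with the paper's: fix a saturated normal chart, use simplicity to pull $\Omega(s)$ out of the $t$-integration, apply $(\star)$ fibrewise after identifying $\tau\circ\zeta(t,s)=\mathrm{dist}(t,\partial I_s)$ via the time-translation property of maximal intervals, and integrate over the transverse parameters. That part is complete and correct, and your observation that simplicity is precisely what lets $\Omega(s)$ factor out is the right diagnosis of why the general case needs more work.

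The one place you stop short is the globalization, which you explicitly defer (``I expect the technical heart of the write-up to be making this disintegration rigorous''). You are right that a naive partition of unity fails because of the nonlinearity of $|X\varphi|^p$, and your instinct to disintegrate $\omega$ along orbits is morally what happens, but no orbit-space measure theory is actually needed, and invoking it raises exactly the difficulty you flag ($M/X$ need not be Hausdorff, let alone a manifold). The paper closes the gap with an elementary disjointification: cover $\mathrm{supp}\,\varphi$ by finitely many saturated normal chart domains $U_1,\dots,U_n$ and set $W_1=U_1$, $W_k=U_k\setminus\bigcup_{l=1}^{k-1}\bar U_l$. These are pairwise disjoint saturated open sets, still normal chart domains, whose closures cover $\mathrm{supp}\,\varphi$, so the local inequality holds on each $W_k$ and the two sides simply add up by the ``integration over parametrisations'' lemma (Lemma A.2), the boundaries $\partial W_k$ being $\omega$-null. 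So your proposal is the paper's proof with the final patching device left unstated; the gap is real but entirely fillable, and the fix is a two-line set-theoretic construction rather than a disintegration theorem.
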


\begin{proof}
Let $(U,\chi)$ be a saturated normal coordinate chart on $M$ with $\chi: U \rightarrow \tilde{U}$ for some open $\tilde{U}\subseteq \mathbb{R}^{N-1}$ and let $\zeta=\chi^{-1}$ be the corresponding parametrisation. Since $U$ is saturated, $\tilde{U}$ must be of the form $\bigsqcup_{s\in S}I_s$ for some open $S\in \mathbb{R}^{N-1}$ and some intervals $I_s \subsetneqq \mathbb{R}$, so we have coordinates $(t,s)$ where $s\in S$ and $t\in I_s$. Let $$\omega = \Omega(s)dt \wedge ds$$ in these coordinates. Moreover, we clearly have that $$\zeta(t,s)=\gamma_{\zeta(0,s)}(t), \ \ \ t\in I_s = I_{\zeta(0,s)}$$ and that $$\tau \circ \zeta(t,s) = \textrm{dist}(t,\partial I_s).$$

Now, it is clear that $\varphi \circ \zeta(\cdot,s) \in C^1_c(I_s)$ for all $s\in S$. Applying the one-dimensional Hardy inequality $(\star)$ on $\varphi \circ \zeta(\cdot,s)$ for fixed $s$ we get $$\int_{I_s} |\partial_t (\varphi \circ \zeta)(t,s))|^p dt \geq \bigg( \frac{p-1}{p} \bigg)^p \int_{I_s} \frac{|\varphi \circ \zeta(t,s)|^p}{\textrm{dist}(t,\partial I_s)^p} dt,$$ which by the properties of normal coordinates is equivalent to $$\int_{I_s} |X \varphi \circ \zeta(t,s)|^p dt \geq \bigg( \frac{p-1}{p} \bigg)^p \int_{I_s} \frac{|\varphi \circ \zeta(t,s)|^p}{\tau^p \circ \zeta(t,s)} dt.$$ Multiplying both sides by $\Omega(s)$ (which is positive by assumption), integrating over $S$ and applying Fubini's theorem yields $$\int_S \int_{I_s} |X \varphi \circ \zeta(t,s)|^p \Omega(s) dtds \geq \bigg( \frac{p-1}{p} \bigg)^p \int_S \int_{I_s} \frac{|\varphi \circ \zeta(t,s)|^p}{\tau^p \circ \zeta(t,s)} \Omega(s) dtds,$$ which, in terms of differential forms, is the same as $$\int_{\tilde{U}} |X \varphi \circ \zeta|^p \Omega \det \geq \bigg( \frac{p-1}{p} \bigg)^p \int_{\tilde{U}} \frac{|\varphi \circ \zeta|^p}{\tau^p \circ \zeta} \Omega \det.$$ The diffeomorphic invariance formula for integration on forms (see the Appendix) then yields $$\int_U |X\varphi|^p \omega \geq \bigg( \frac{p-1}{p} \bigg)^p \int_U \frac{|\varphi|^p}{\tau^p} \omega.$$

To complete the proof, let $\{(U_j,\chi_j)\}_{j\in J}$ be an atlas of $M$ that consists of saturated normal charts as above. The collection $\{U_j\}_{j\in J}$ is then an open cover of $M$, and therefore an open cover of $\textrm{supp}(\varphi)$. Furthermore, $\textrm{supp}(\varphi)$, being compact, must have a finite subcover $\{ U_1,\ldots,U_n \}$. For the final step, consider the saturated open sets $W_1,\ldots,W_n$, defined as $$W_1=U_1, \ \ \ W_k= U_k \setminus \bigcup_{l=1}^{k-1} \bar{U}_l.$$ The collection $\{ (W_k,\chi_k) \}$ and its corresponding parametrisations then satisfy the conditions of Lemma A.2 and the proof is finished.
\end{proof}

In some cases, the last argument can be replaced by a partition of unity argument. This would require that we project an open cover onto $M/X$, and then assume a partition of unity for the projected cover. However, this assumption is not always valid, as $M/X$ need not be Hausdorff.

Another, more important point is to note that the constant that appears in the theorem is optimal. Seeing that this is so is rather straightforward: simply pick a sequence $\varphi_\epsilon$ such that $\textrm{supp}(\varphi_\epsilon)$ converges to a single integral curve. If the inequality where to hold true for a larger constant, that would mean that the one-dimensional Hardy inequality from which it was derived would also hold for that constant, which is known to be false.

\begin{example}
The prototype of simple traceable spaces spaces is the Euclidean half-space $\mathbb{R}^N_+= \{ x\in \mathbb{R}^N : x_N>0 \} $ equipped with the parallel vector field $\partial/\partial x_N$. The normal coordinates in this case are given by $t=x_N$ and $s=(x_1,\ldots,x_{N-1})$, so we have that $\omega = dx_1 \wedge \cdots \wedge dx_N = dt \wedge ds$ (if necessary, take one of the s coordinates to have an opposite sign in order to mitigate the extra sign that might occur from changing the order in the exterior product). Moreover, we clearly have $\tau = x_N$, so it follows from Theorem 3.1. that the inequality $$\int_{\mathbb{R}^N_+} \bigg| \frac{\partial \varphi}{\partial x_N} \bigg|^p dx \geq \bigg( \frac{p-1}{p} \bigg)^p \int_{\mathbb{R}^N_+} \frac{|\varphi|^p}{x_N^p} dx, \ \ \ \varphi\in C^1_c(\mathbb{R}^N_+)$$ holds for all $p>1$.
\end{example}

\begin{example}
A less trivial example that still falls within the class of simple cases is that of a two-dimensional angle $A=\{ x\in \mathbb{R}^2 : 0<\theta(x)<\alpha \}$ (for some given $\alpha \in (0,2\pi])$ equipped with the vector field $$X=r^{\epsilon/p} \frac{\partial}{\partial \theta}$$ for some $p>1$ and some $\epsilon \in \mathbb{R}$. In polar coordinates, we have $\omega = rd\theta \wedge dr$. To find a set of normal coordinates $(t,s)$ for this configuration, choose $s=r$ and notice that we must wave $$\frac{\partial}{\partial t} = r^{\epsilon/p} \frac{\partial}{\partial \theta},$$ and therefore we may choose $t=\frac{\theta}{r^{\epsilon/p}}$. Moreover, it follows that $$d\theta = s^{\epsilon/p}dt +\frac{\epsilon}{p}ts^\frac{\epsilon-p}{p} ds,$$ hence $\omega = s^{\epsilon/p+1}dt\wedge ds$, so $(A,X,\omega)$ is simple. Since the integral curves here follow co-centric circles each with angular velocity $r^{\epsilon/p}$, it follows that $\tau = r^{-\epsilon/p} \min \{\theta, \alpha - \theta \}$. Direct application of Theorem \ref{H1} yields the inequality $$\int_A r^\epsilon \bigg| \frac{\partial \varphi}{\partial \theta} \bigg|^p dx \geq \bigg( \frac{p-1}{p} \bigg)^p \int_A r^\epsilon \frac{|\varphi|^p}{\min \{\theta, \alpha - \theta \}^p} dx, \ \ \ \varphi \in C^1_c(A).$$ It is worth noting that in the special case $\epsilon=-p$, we get an inequality involving the angular component of the gradient, thus we have $$\int_A |\nabla \varphi|^p dx \geq \bigg( \frac{p-1}{p} \bigg)^p \int_A \frac{|\varphi|^p}{r^p \min \{\theta, \alpha - \theta \}^p} dx, \ \ \ \varphi \in C^1_c(A).$$
\end{example}

\section{$p$-normal coordinates}

The proof of (\ref{H1}) was based on the fact that we can multiply the integral over $dt$ with $\Omega(s)$ and then pass $\Omega(s)$ inside the integral (since it is independent of $t$). If we look at the more general case of a non-simple space where $\Omega(t,s)$ depends also on $t$, it is clear that one cannot repeat this argument.

We can bypass this difficulty by introducing new coordinates that are related to the initial set of normal coordinates $(t,s)$. These new coordinates, denoted $(t',s')$, will have the property $$X = \frac{\partial}{\partial t} = \Omega'(t',s')^{-1/p} \frac{\partial}{\partial t'},$$ where $\Omega'(t',s') = \omega(\partial_{t'},\partial_{s'})$ is the local volume density in these new coordinates. This way, we can get an integral over $dt'$ which contains both the correct vector field and the correct volume element from the beginning.

This motivates the following definition.

\begin{definition}
Let $(M,X,\omega)$ be a directed volume space, and let $p>1$. A set of coordinates $(\tau,\sigma)=(\tau,\sigma^1, \ldots ,\sigma^{N-1})$ (defined on some open set) will be called a set of \textit{$p$-normal coordinates along $X$ with respect to $\omega$}  if $$X=\Omega(\tau,\sigma)^{-1/p} \frac{\partial}{\partial \tau}.$$
\end{definition}

We dedicate the remainder of this section to prove the existence and some useful properties of these coordinates. We also explore their connection to regular normal coordinates as defined previously, and relate to them a well-defined (independent of coordinates) temporal/volumetric ``distance" like $\tau$ in the previous sections. These facts will form the necessary background to generalise Theorem \ref{Th1} to include non-simple spaces.

\begin{proposition}[Existence]
Let $(t,s)$ be a set of normal coordinates on some open $U\subseteq M$ in the directed volume space $(M,X,\omega)$. The coordinates $(t',s')$ defined by $$t'=\int^t \Omega(\xi,s)^{-\frac{1}{p-1}} d\xi, \ \ \ s'=s$$ is a set of $p$-normal coordinates along $X$ with respect to $\omega$ on $U$.
\end{proposition}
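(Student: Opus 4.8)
The plan is to verify the two defining requirements at once: that the map $(t,s)\mapsto(t',s')$ is a genuine change of coordinates, and that in the new coordinates $X$ takes the prescribed form $X=\Omega'^{-1/p}\,\partial/\partial t'$. I would treat the whole computation as local, working on the (possibly shrunk) domain $U$ where the original normal coordinates $(t,s)$ are defined, and keep in mind throughout that $\Omega>0$, since $\omega$ is positive in the chosen orientation; this positivity is what makes both the coordinate change non-degenerate and the fractional powers well defined.

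First I would check that $(t',s')$ are bona fide coordinates. Writing $t'=F(t,s)=\int^t \Omega(\xi,s)^{-1/(p-1)}\,d\xi$ and $s'=s$, the Jacobian of the transformation is triangular in the natural block form, with diagonal entries $\partial t'/\partial t = \Omega(t,s)^{-1/(p-1)}$ and $\partial s'/\partial s = I$, and with $\partial s'/\partial t=0$. Its determinant is therefore $\Omega^{-1/(p-1)}>0$. Since for each fixed $s$ the map $t\mapsto t'$ is strictly increasing (its integrand is positive), the transformation is injective, hence a diffeomorphism onto its image, so $(t',s')$ is a legitimate coordinate system.

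Next I would compute how $X=\partial/\partial t$ transforms. By the chain rule for coordinate vector fields, $\partial/\partial t = (\partial t'/\partial t)\,\partial/\partial t' + (\partial s'/\partial t)\,\partial/\partial s'$. The point is that, although $t'$ depends on $s$, the relation $s'=s$ forces $\partial s'/\partial t=0$, so the $\partial/\partial s'$ term drops out and we obtain $X=\Omega^{-1/(p-1)}\,\partial/\partial t'$. It then remains to identify this prefactor with $\Omega'^{-1/p}$. For that I would transform the volume form: from $dt'\wedge ds' = (\det J)\,dt\wedge ds = \Omega^{-1/(p-1)}\,dt\wedge ds$ we get $\omega = \Omega\,dt\wedge ds = \Omega^{\,p/(p-1)}\,dt'\wedge ds'$, so the new local volume density is $\Omega'=\Omega^{\,p/(p-1)}$. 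Consequently $\Omega'^{-1/p}=\Omega^{-1/(p-1)}$, which is exactly the prefactor found above, and the $p$-normal identity $X=\Omega'^{-1/p}\,\partial/\partial t'$ follows.

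The only genuinely delicate point I anticipate is the bookkeeping in the vector-field transformation: one must resist inserting a $\partial/\partial s'$ term coming from the $s$-dependence of $t'$, since that dependence enters the transformation of $\partial/\partial s$ (which acquires a $\partial/\partial t'$ component) rather than that of $\partial/\partial t$. Everything else is a routine manipulation of the Jacobian together with the diffeomorphism-invariant transformation law for the volume density.
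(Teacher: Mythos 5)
Your proof is correct and follows essentially the same route as the paper: both arguments reduce to computing $\partial t'/\partial t=\Omega^{-1/(p-1)}$, using the chain rule (with $\partial s'/\partial t=0$) to get $X=\Omega^{-1/(p-1)}\partial/\partial t'$, and identifying $\Omega'=\Omega^{p/(p-1)}$ from the transformation law of the volume density so that the prefactor equals $\Omega'^{-1/p}$. Your explicit check that the triangular Jacobian has positive determinant (hence that $(t',s')$ is a genuine coordinate system) is a small addition the paper leaves implicit, but it does not change the argument.
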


\begin{proof}
It is clear that $$\frac{\partial t'}{\partial t} = \Omega(t,s)^{-\frac{1}{p-1}},$$ and we calculate $$\Omega(t,s)=\omega(\partial_t,\partial_s)=\frac{\partial t'}{\partial t} \omega (\partial_{t'},\partial_{s'})= \frac{\partial t'}{\partial t} \Omega'(t',s').$$ It follows that $$\frac{\partial t'}{\partial t} = \Omega'(t',s')^{-1/p},$$ thus $$\Omega'(t',s')^{-1/p} \frac{\partial}{\partial t'} = \frac{\partial}{\partial t} = X,$$ so the set of coordinates $(t',s')$ is indeed $p$-normal along $X$ with respect to $\omega$.

Since $\omega$ is non-vanishing, it follows that $\Omega(t,s) >0$, so in particular $t'$ is well-defined everywhere in $U$.
\end{proof}

This not only proves existence, but also provides a practical way to compute such coordinates, provided we already have a set of normal coordinates, which are often straightforward to acquire.

Another fact is that these coordinates cooperate well with the flow of the field $X$. If we choose a saturated normal chart, which we already know how to produce, it is straightforward to turn it into a $p$-normal saturated coordinate chart using the above transformation. This is evident from the fact that the vector field $\Omega(\tau,\sigma)^{-1/p}X$ has the same integral curves as $X$, only reparametrised.

Recall that for a directed volume space, we defined the associated temporal distance $\tau:M \rightarrow \mathbb{R}$, which essentially measures the amount of time required to reach the ``boundary" of $M$ moving along the flow of $X$. Equivalently, if $(t,s)$ is a set of normal coordinates in a saturated domain such that $t\in I_s = (a_s,b_s)$, then $$\tau= \textrm{dist}(t,\partial I_s)= \min (t-a_s,b_s-t).$$ We now introduce the following notation.

\begin{definition}
Let $f:I\rightarrow \mathbb{R}$ be a measurable function on the interval $I=(a,b)$ (here it is possible that $a=-\infty$ or $b=+\infty$). Define $$\int_{\partial I}^t f(\xi)d\xi = \min \bigg( \int_a^t f(\xi)d\xi,\int_t^b f(\xi)d\xi \bigg).$$
\end{definition}

In this notation, it is clear that $$\tau=\int_{\partial I_s}^t d\xi.$$ Moreover, the condition that $(M,X)$ is traceable can be rewritten as $$\tau=\int_{\partial I_s}^t d\xi < \infty \textrm{ everywhere in } M.$$

It turns out that what we need in the case of non-simple spaces, is a suitable modification of this with respect to $p$-normal coordinates.

\begin{definition}
Let $(M,X,\omega)$ be a directed volume space and let $(t,s)$, $t\in I_s$ be normal coordinates for a saturated chart domain $U\subseteq M$, let $\Omega(t,s)$ be the local volume density in these coordinates and let $p>1$.
\begin{enumerate}
\item We say that $U$ is \textit{$p$-traceable} if $$\int_{\partial I_s}^t \Omega(\xi,s)^{-\frac{1}{p-1}} d\xi < \infty \textrm{ everywhere in } U.$$
\item If $U$ is $p$-traceable, we define the associated \textit{temporal/volumetric distance} $\tau_p:U\rightarrow \mathbb{R}$ to be the function $$\tau_p = \Omega(t,s)^\frac{1}{p-1} \int_{\partial I_s}^t \Omega(\xi,s)^{-\frac{1}{p-1}} d\xi.$$
\end{enumerate}
\end{definition}

\begin{proposition}
Everything in the above definition is well-defined, i.e. independent of the choice of normal coordinates in $U$.
\end{proposition}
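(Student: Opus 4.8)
The plan is to reduce everything to an explicit description of how two sets of normal coordinates on $U$ are related, and then to track how $\Omega$ and the defining integral transform under that change. Suppose $(t,s)$ and $(t',s')$ are both normal coordinates on $U$, so that $\partial/\partial t = X = \partial/\partial t'$. Writing the transition as $t' = t'(t,s)$, $s' = s'(t,s)$ and expanding $\partial/\partial t$ in the primed frame, the requirement $\partial/\partial t = \partial/\partial t'$ forces $\partial t'/\partial t = 1$ and $\partial s'/\partial t = 0$ componentwise. Hence the transition must have the restricted form $t' = t + h(s)$, $s' = \psi(s)$, for a smooth function $h$ and a diffeomorphism $\psi$ of the transverse variable $s$ alone. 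This structural fact is the heart of the argument.

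Next I would compute the transformation law for the volume density. From $dt' = dt + dh$, where $dh$ is a combination of the $ds^i$, and $ds' = (D\psi)\,ds$, the term involving $dh$ wedges $N$ one-forms all lying in the $(N-1)$-dimensional span of the $ds^i$, and therefore vanishes; what survives is $dt'\wedge ds' = \det(D\psi)\,dt\wedge ds$. Comparing $\omega = \Omega\,dt\wedge ds = \Omega'\,dt'\wedge ds'$ then yields $\Omega' = \Omega/\det(D\psi)$. Two features of this relation are decisive: $\det(D\psi)$ depends on $s$ only and not on $t$, and it is strictly positive, since $\Omega$ and $\Omega'$ are both positive densities in the chosen orientation, which forces the transition to be orientation-preserving.

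Then I would substitute into the defining integral. At fixed $s$ the interval $I_s = (a_s,b_s)$ becomes $I_{s'} = (a_s + h(s), b_s + h(s))$ under $t' = t + h(s)$, a pure translation. Using $\Omega'(t',s') = \Omega(t'-h(s),s)/\det(D\psi)(s)$, pulling the $t$-independent factor out of the integral, and changing variables $\xi = \eta - h(s)$, I obtain $\int_{\partial I_{s'}}^{t'}\Omega'(\eta,s')^{-1/(p-1)}\,d\eta = \det(D\psi)(s)^{1/(p-1)}\int_{\partial I_s}^{t}\Omega(\xi,s)^{-1/(p-1)}\,d\xi$; the positive factor passes through the $\min$ implicit in the $\int_{\partial I}$ notation because $\min(cA,cB) = c\min(A,B)$ for $c>0$. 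Since the two integrals differ only by this finite positive factor, finiteness of one is equivalent to finiteness of the other, so $p$-traceability is independent of the chart. Finally, multiplying by $\Omega'(t',s')^{1/(p-1)} = \Omega(t,s)^{1/(p-1)}\det(D\psi)(s)^{-1/(p-1)}$, the two powers of $\det(D\psi)$ cancel exactly, giving $\tau_p' = \tau_p$ as a point function on $U$.

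The main obstacle is the first step: rigorously pinning down that the only freedom in a set of normal coordinates is the fibrewise time-shift $t\mapsto t+h(s)$ together with a reparametrisation of the transverse variable. Once this is in hand, the remainder is careful bookkeeping — tracking the translation in the limits of integration and keeping the two appearances of $\det(D\psi)$ straight, so that they cancel in $\tau_p$ while surviving (harmlessly) in the finiteness criterion for $p$-traceability. A minor but essential point to state explicitly is the positivity $\det(D\psi)>0$, which is precisely what allows the factor to be moved through the $\min$ and to be raised to the real power $1/(p-1)$.
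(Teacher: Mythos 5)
Your argument is correct and follows essentially the same route as the paper: both derive $\partial t'/\partial t = 1$ and $\partial s'/\partial t = 0$ from $\partial_t = \partial_{t'} = X$, obtain $\Omega' = \Omega/\det(D\psi)$ with a $t$-independent positive Jacobian, and then track the factor $\det(D\psi)^{1/(p-1)}$ through the integral so that finiteness is preserved and the two powers cancel in $\tau_p$. The only cosmetic difference is that you compute the density transformation by wedging the differentials $dt'\wedge ds'$, whereas the paper evaluates $\omega$ on the coordinate vector fields using multilinearity and skew-symmetry; your explicit remarks about the translation $t'=t+h(s)$ and about passing the positive constant through the $\min$ are welcome clarifications of steps the paper leaves implicit.
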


\begin{proof}
Suppose that we have two sets of normal coordinates $(t,s)=(t,s^1,\ldots, s^{N-1})$ and $(t',s')=(t',(s')^1,\ldots,(s')^{N-1})$ of the same orientation in U. By the chain rule, we have that
$$\frac{\partial}{\partial t} = \frac{\partial t'}{\partial t} \frac{\partial}{\partial t'} + \sum_{j=1}^{N-1} \frac{\partial (s')^j}{\partial t} \frac{\partial}{\partial (s')^j},$$ $$\frac{\partial}{\partial s^i} = \frac{\partial t'}{\partial s^i} \frac{\partial}{\partial t'} + \sum_{j=1}^{N-1} \frac{\partial (s')^{j}}{\partial s^i} \frac{\partial}{\partial (s')^{j}},$$ where $i=1,\ldots, N-1$. Since these are both sets of normal coordinates, we must have $\partial_t = \partial_{t'} = X$. This implies that $$\frac{\partial t'}{\partial t} = 1 \textrm{ and } \frac{\partial (s')^j}{\partial t} = 0 \textrm{ for all } j=1,\ldots , N-1.$$ In particular, the $s'$ coordinates are independent of $t$ and $s'=\sigma(s)$ for some diffeomorphism $\sigma$ between open sets in $\mathbb{R}^{N-1}$.

By linearity and skew-symmetry of $\omega$, we have that $$\Omega(t,s)=\omega \bigg( \frac{\partial}{\partial t}, \frac{\partial}{\partial s^1},\ldots,\frac{\partial}{\partial s^{N-1}} \bigg)=$$ $$\sum_{j_1,\ldots,j_{N-1}=1}^{N-1} \frac{\partial (s')^{j_1}}{\partial s^1} \cdots \frac{\partial (s')^{j_{N-1}}}{\partial s^{N-1}} \omega \bigg( \frac{\partial}{\partial t'}, \frac{\partial}{\partial (s')^{j_1}},\ldots,\frac{\partial}{\partial (s')^{j_{N-1}}} \bigg)=$$ $$\sum_{\pi \in S_{N-1}} \frac{\partial (s')^{\pi(1)}}{\partial s^1} \cdots \frac{\partial (s')^{\pi(N-1)}}{\partial s^{N-1}} (-1)^\pi \Omega'(t',s'),$$ where the last sum is over all permutations $\pi$ in $(N-1)$ elements and $(-1)^\pi$ is the sign of $\pi$. It follows that $$\frac{\Omega(t,s)}{\det D\sigma(s)} = \Omega'(t',s'),$$ where $D\sigma$ is the Jacobian matrix of $s'=\sigma(s)$. Since $\sigma$ is an orientation-preserving diffeomorphism, this matrix is non-singular and the determinant is positive.

It is straightforward to show that neither the convergence of the integral in (1) of the definition nor the formula of $\tau_p$ in (2) are affected if we switch between normal coordinates. Indeed, we have that $$\int_{\partial I_{s'}}^{t'} \Omega'(\xi',s')^{-\frac{1}{p-1}} d\xi' = \int_{\partial I_s}^t \bigg[ \frac{\Omega(\xi,s)}{\det D\sigma(s)} \bigg]^{-\frac{1}{p-1}} \frac{d\xi'}{d\xi} d\xi =$$ $$(\det D\sigma(s))^{\frac{1}{p-1}} \int_{\partial I_s}^t \Omega(\xi,s)^{-\frac{1}{p-1}} d\xi,$$ so $$\int_{\partial I_s}^t \Omega(\xi,s)^{-\frac{1}{p-1}} d\xi <\infty \Leftrightarrow \int_{\partial I_{s'}}^{t'} \Omega'(\xi',s')^{-\frac{1}{p-1}} d\xi'<\infty,$$ and it is clear that $\tau_p=\tau_p'$.
\end{proof}

Since every directed space $(M,X)$ admits an open cover of saturated normal coordinate charts, and since the above notions are independent of the choice of such a chart, we can unambiguously extend these notions over the whole manifold. This way we may define the global function $\tau_p: M \rightarrow \mathbb{R}$ given locally by $$\tau_p = \Omega(t,s)^\frac{1}{p-1} \int_{\partial I_s}^t \Omega(\xi,s)^{-\frac{1}{p-1}} d\xi.$$ At this point, it is clear that $(M,X,\omega)$ is $p$-traceable if and only if the function $\tau_p$ is defined everywhere in $M$.

As a final remark, we would like to point out that in the case where $(M,X,\omega)$ is simple, $p$-traceability coincides with traceability and $\tau_p =\tau$, so this is indeed a meaningful extension of the previous concepts.

\section{The general case}

We are now ready to state and prove our main result.

\begin{theorem}
\label{Th2}
Let $(M,X,\omega)$ be a directed volume space and let $p>1$. Then the inequality
\begin{equation}
\label{H2}
\int_M |X\varphi|^p \omega \geq \bigg( \frac{p-1}{p} \bigg)^p \int_M \frac{|\varphi|^p}{\tau_p^p} \omega,\ \ \ \varphi \in C^1_c(M)
\end{equation}
is valid whenever the space is $p$-traceable.
\end{theorem}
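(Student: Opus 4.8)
The plan is to mirror the three-step strategy of Theorem \ref{Th1}, but to carry it out in $p$-normal coordinates rather than ordinary normal coordinates. The whole point of Section 4 is that this change of chart forces the volume density to cancel on \emph{both} sides of the inequality, reducing the general case to the one-dimensional estimate $(\star)$ applied curve by curve, exactly as in the simple case.

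First I would reduce to a single saturated $p$-normal chart. Starting from a saturated normal chart $(U,(t,s))$ --- which form an open cover by the construction recalled in Section 2 --- Proposition 4.2 produces $p$-normal coordinates $(t',s')$ on the same domain via $t'=\int^t \Omega(\xi,s)^{-1/(p-1)}d\xi$, $s'=s$. Since this reparametrisation merely rescales each integral curve, the chart stays saturated, so the image has the form $\bigsqcup_{s'} I'_{s'}$ with $t'\in I'_{s'}$; write $\zeta'$ for the corresponding parametrisation and $\omega=\Omega'(t',s')\,dt'\wedge ds'$.

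The heart of the argument is to compute both integrands in these coordinates. Using $X=\Omega'(t',s')^{-1/p}\,\partial_{t'}$, the left-hand integrand collapses to $|X\varphi|^p\omega = |\partial_{t'}(\varphi\circ\zeta')|^p\,dt'\wedge ds'$, the two powers of $\Omega'$ cancelling. For the right-hand side I would extract two identities from the proof of Proposition 4.2: the relation $\Omega(t,s)^{1/(p-1)}=\Omega'(t',s')^{1/p}$, and the change of variable $dt'=\Omega(t,s)^{-1/(p-1)}dt$, which turns $\int_{\partial I_s}^t \Omega(\xi,s)^{-1/(p-1)}d\xi$ into $\textrm{dist}(t',\partial I'_{s'})$. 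Together these give $\tau_p=\Omega'(t',s')^{1/p}\,\textrm{dist}(t',\partial I'_{s'})$, whence $\tau_p^p=\Omega'(t',s')\,\textrm{dist}(t',\partial I'_{s'})^p$ and the right-hand integrand also collapses: $\frac{|\varphi|^p}{\tau_p^p}\omega=\frac{|\varphi\circ\zeta'|^p}{\textrm{dist}(t',\partial I'_{s'})^p}\,dt'\wedge ds'$. Both sides are now in exactly the simple-case form, with the density gone.

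With this reduction in hand the remainder proceeds as in Theorem \ref{Th1}. For each fixed $s'$, $p$-traceability says $\textrm{dist}(t',\partial I'_{s'})<\infty$, i.e. $I'_{s'}$ has at least one finite endpoint, so the one-dimensional Hardy inequality $(\star)$ --- which remains valid with the $\min$-distance when a single endpoint is finite, reducing to the classical half-line inequality there --- applies to $\varphi\circ\zeta'(\cdot,s')\in C^1_c(I'_{s'})$. Integrating over $s'$, invoking Fubini, and then the diffeomorphic invariance of integration on forms yields the estimate on $U$; the local estimates are patched over a finite saturated cover of $\textrm{supp}(\varphi)$ by the $W_k$ construction and Lemma A.2, verbatim as before. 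The only genuinely new work is the density-cancellation identity $\tau_p=\Omega'(t',s')^{1/p}\,\textrm{dist}(t',\partial I'_{s'})$; once it is checked there is no further obstacle, since everything else is a transcription of the simple case.
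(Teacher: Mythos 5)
Your proposal is correct and follows essentially the same route as the paper: pass to $p$-normal coordinates via Proposition 4.2, verify the key identity $\tau_p^p=\Omega'(t',s')\,\mathrm{dist}^p(t',\partial I_{s'})$ so that the volume density cancels on both sides, apply the one-dimensional inequality $(\star)$ curve by curve, and patch with the same saturated-cover argument as in Theorem \ref{Th1}. Your explicit remark that $(\star)$ still applies when only one endpoint of $I'_{s'}$ is finite is a small point the paper leaves implicit, but it does not change the argument.
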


\begin{proof}
Let $U$ be a saturated coordinate domain with normal coordinates $\chi=\zeta^{-1}=(t,s)$ and corresponding $p$-normal coordinates $\chi'=(\zeta')^{-1}=(t',s')$ constructed as demonstrated in the previous section. Let $\varphi \in C^1_c(M)$. As with the simple case, apply the one-dimensional Hardy inequality to $\varphi \circ \zeta'(\cdot,s) \in C^1_c(I_{s'})$ to get $$\int_{I_{s'}} |\partial_{t'} (\varphi \circ \zeta')(t',s')|^p dt' \geq \bigg( \frac{p-1}{p} \bigg)^p \int_{I_{s'}} \frac{| \varphi \circ \zeta' (t',s')|^p}{\textrm{dist}^p(t', \partial I_{s'})} dt',$$ which by the properties of the $p$-normal coordinates becomes $$\int_{I_{s'}} |X \varphi \circ \zeta' (t',s')|^p \Omega'(t',s')dt' \geq \bigg( \frac{p-1}{p} \bigg)^p \int_{I_{s'}} \frac{| \varphi \circ \zeta' (t',s')|^p}{\textrm{dist}^p(t', \partial I_{s'})} dt'.$$ Integrating both sides over the $s'$-coordinates then yields $$\int_{S'} \int_{I_{s'}} |X \varphi \circ \zeta' (t',s')|^p \Omega'(t',s') dt'ds' \geq $$ $$\bigg( \frac{p-1}{p} \bigg)^p \int_{S'} \int_{I_{s'}} \frac{| \varphi \circ \zeta' (t',s')|^p}{\Omega'(t',s')\textrm{dist}^p(t', \partial I_{s'})} \Omega'(t',s') dt'ds'.$$

To show that this is the same as $$\int_U |X\varphi|^p \omega \geq \bigg( \frac{p-1}{p} \bigg)^p \int_U \frac{|\varphi|^p}{\tau_p^p} \omega,$$ it remains to be shown that $\tau_p^p= \Omega'(t',s') \textrm{dist}^p(t',\partial I_{s'})$. This is straightforward, as we have $$\textrm{dist}(t',\partial I_{s'}) = \int_{\partial I_s}^t \Omega^{-\frac{1}{p-1}}(\xi,s)d\xi$$ from the definition, and by elementary calculations we also have that $\Omega'(t',s')=\Omega^\frac{p}{p-1}(t,s)$.

The proof is again completed by a similar argument as in \ref{Th1}.
\end{proof}

Let us make a few remarks about the result. The first is its generality. The only condition that we have imposed for the inequality to hold true is $p$-traceability of $(M,X,\omega)$. The number of cases this applies to is vast, including many important cases that are already of interest. We will provide specific examples in the remainder of our work. For the time being, let us note that the only thing we need - in principle - in order to check whether the condition is satisfied is to find a set of normal coordinates $(t,s)$, compute the local volume density $\Omega(t,s)$ in these coordinates and then check if the integral $$\int_{\partial I_s}^t \Omega^{-\frac{1}{p-1}}(\xi,s) d\xi$$ converges. In a large number of cases, including many of the cases that are of immediate interest, this poses no real hardship.

What we gain from this process, however, is often highly non-trivial results. If the space in question indeed turns out to be $p$-traceable, the result provides an explicit, optimal Hardy potential in terms of the induced temporal/volumetric distance $$\Omega^{\frac{1}{p-1}}(t,s)\int_{\partial I_s}^t \Omega^{-\frac{1}{p-1}}(\xi,s) d\xi.$$

\begin{example}
As an elementary application to showcase how the method works in practice, we provide an alternative proof of the standard Euclidean Hardy inequality in $\mathbb{R}^N$ featuring the distance from a single point. Here, choose $M= \mathbb{R}^N \setminus \{ 0 \}$, $X=\partial /\partial r$ and $\omega = \det$ (the Euclidean volume form).

Finding normal coordinates for this configuration is trivial: since we must have $$\frac{\partial}{\partial t} = \frac{\partial}{\partial r},$$ simply choose $t=r$. For the rest of the coordinates there is a lot of freedom of choice, but we can simply choose $s=\theta$, where $\theta$ are the angles in the spherical coordinate system (therefore the spherical coordinates as a whole forms a set of normal coordinates in our case).

The expression of the Euclidean volume form in spherical coordinates is of the form $\omega = r^{N-1}f(\theta) dr \wedge d\theta$ for some $f(\theta)$ that involves powers  of sines of the angles, therefore in our chosen normal coordinates we have the same representation $$\omega = t^{N-1}f(s)dt \wedge ds,$$ so it is clear that the local volume density is $\Omega(t,s) = t^{N-1}f(s)$.

Now let $1<p \neq N$. The temporal/volumetric distance is $$\tau_p = \Omega^{\frac{1}{p-1}}(t,s)\int_{\partial I_s}^t \Omega^{-\frac{1}{p-1}}(\xi,s) d\xi = t^{\frac{N-1}{p-1}} \int_{\{ 0,\infty \}}^t \xi^{-\frac{N-1}{p-1}} d\xi.$$ To compute this, we must consider the two different cases $p<N$ and $p>N$, but in either case the result is $$\tau_p=\frac{p-1}{|p-N|}r.$$ By \ref{Th2}, it follows that the inequality $$\int_{\mathbb{R}^N} \bigg| \frac{\partial \varphi}{\partial r} \bigg|^p dx \geq \bigg| \frac{p-N}{p} \bigg|^p \int_{\mathbb{R}^N} \frac{|\varphi|^p}{r^p} dx$$ holds for all $\varphi \in C^1_c(\mathbb{R}^N \setminus \{ 0 \})$, as expected.

However, notice the unorthodox manner in which we obtain the best constant. In our method, this constant is not merely the result of algebraic operations, but has a geometric significance as well: it is a direct consequence of the $p$-dependence of the distance $\tau_p$.
\end{example}

\begin{example}
In the same manner as in the previous example, by choosing $X=r^{-\epsilon /p} \partial / \partial r$ we can prove the weighted inequality $$\int_{\mathbb{R}^N} \frac{1}{r^\epsilon} \bigg| \frac{\partial \varphi}{\partial r} \bigg|^p dx \geq \bigg| \frac{p-N+\epsilon}{p} \bigg|^p \int_{\mathbb{R}^N} \frac{|\varphi|^p}{r^{p+\epsilon}} dx, \ \ \ \varphi \in C^1_c(\mathbb{R}^N \setminus \{ 0 \})$$ for $\epsilon \neq N-p$. The calculations are a bit more involved than before but still elementary.
\end{example}

\begin{example}
As a final example, we turn our attention to the hyperbolic space $\mathbb{H}^N$, where a peculiar phenomenon occurs: the Hardy inequality becomes a Poincar\'e inequality. We employ the Poincar\'e half space model, where $\mathbb{H}^n = \{ x\in \mathbb{R}^N : x_N>0 \}$ with $g_{\mathbb{H}^N} = \frac{1}{x_N^2} g_{\mathbb{R}^N}$. The Riemannian volume form in this case reads $\omega_{\mathbb{H}^N} = x_N^{-N} \det$. Let $X = x_N \frac{\partial}{\partial x_N}$. It is clear that $|X|=1$. To find a set of normal coordinates for $(\mathbb{H}^N,X)$ we must find a $t$ such that $$\frac{\partial}{\partial t} = x_N \frac{\partial}{\partial x_N},$$ so we choose $t = \log x_N$ and $s=(x_1,\ldots,x_{N-1})$. It follows that $\omega = e^{-(N-1)t} ds \wedge dt$. Finally, we calculate $$\tau_p = e^{-\frac{N-1}{p-1} t} \int_{-\infty}^t e^{\frac{N-1}{p-1} \xi} d\xi = \frac{p-1}{N-1},$$ from which we obtain the inequality $$\int_{\mathbb{H}^N} |\nabla_{\mathbb{H}^N} \varphi|^p \omega_{\mathbb{H}^N} \geq \int_{\mathbb{H}^N} \bigg| x_N \frac{\partial \varphi}{\partial x_N} \bigg|^p \omega_{\mathbb{H}^N} \geq \bigg( \frac{N-1}{p} \bigg)^p \int_{\mathbb{H}^N} |\varphi|^p \omega_{\mathbb{H}^N}.$$ This is the classic Poincar\'e inequality for the hyperbolic space, and it is already known to be a consequence of the Hardy inequality (it can actually be obtained via the weighted inequality of the previous example, with minor modifications).
\end{example}

At this point it becomes clear that, when referring to the temporal/volumetric distance, the word ``distance" should not be taken too literally, since it does not always conform to the way we know a distance should behave (e.g. in the last example it was a constant).

\section{Application I: The exterior of a ball}

We will now use the method to obtain some new results. We would like to point out that there are new things that can be said even in the Euclidean case. In this section we focus on the case where $M=E$ is the exterior of a Euclidean ball of dimension $N$.

\begin{theorem}[Hardy Inequality for the exterior of a ball]
Let $E=\{x\in \mathbb{R}^N : |x|>R \}$ be the exterior of the $N$-dimensional Euclidean ball of radius $R$. Then the inequalities $$\int_E \bigg| \frac{\partial \varphi}{\partial r} \bigg|^p dx \geq \bigg( \frac{p-N}{p} \bigg)^p \int_E \frac{|\varphi|^p}{r^{\frac{N-1}{p-1}p}(r^\frac{p-N}{p-1}-R^\frac{p-N}{p-1})^p} dx, \ \ \ p>N,$$ $$\int_E \bigg| \frac{\partial \varphi}{\partial r} \bigg|^N dx \geq \bigg( \frac{N-1}{N} \bigg)^N \int_E \frac{|\varphi|^N}{r^N \log(r/R)^N} dx,$$ $$\int_E \bigg| \frac{\partial \varphi}{\partial r} \bigg|^p dx \geq \bigg( \frac{N-p}{p} \bigg)^p \int_E \frac{|\varphi|^p}{r^{\frac{N-1}{p-1}p}\min \{r^\frac{p-N}{p-1}, R^\frac{p-N}{p-1}-r^\frac{p-N}{p-1} \}^p} dx, \ \ \ 1<p<N$$ hold for all $\varphi \in C^1_c(E)$.
\end{theorem}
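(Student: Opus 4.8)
The plan is to realise $E$ as a directed volume space and apply Theorem \ref{Th2} directly, in exact analogy with the earlier computation for $\mathbb{R}^N \setminus \{0\}$; the only structural difference is that the radial integral curves now start at the sphere $\{r=R\}$ instead of at the origin, so the relevant interval is $(R,\infty)$ rather than $(0,\infty)$. Concretely, I would take $X=\partial/\partial r$ and $\omega=\det$ (the Euclidean volume form), so that the left-hand side is automatically $\int_E |\partial\varphi/\partial r|^p\, dx$. Using spherical coordinates as normal coordinates, $t=r$ and $s=\theta$, the volume form reads $\omega = t^{N-1} f(s)\, dt\wedge ds$ with a positive angular factor $f(s)$, whence the local volume density is $\Omega(t,s)=t^{N-1}f(s)$. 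For each fixed $s$ the integral curve is the radial ray parametrised by $t=r\in(R,\infty)=:I_s$.

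Next I would compute the temporal/volumetric distance. Because the angular factor $f(s)$ cancels between the prefactor $\Omega^{1/(p-1)}$ and the integrand $\Omega^{-1/(p-1)}$, the defining formula collapses to
$$\tau_p = t^{\frac{N-1}{p-1}} \int_{\partial I_s}^t \xi^{-\frac{N-1}{p-1}}\, d\xi = t^{\frac{N-1}{p-1}} \min\bigg( \int_R^t \xi^{-\frac{N-1}{p-1}}\, d\xi,\ \int_t^\infty \xi^{-\frac{N-1}{p-1}}\, d\xi \bigg).$$
The behaviour is governed by the exponent $\frac{N-1}{p-1}$ relative to $1$, equivalently by the sign of $p-N$: the integral $\int_R^t$ is always finite (the integrand is bounded on $[R,t]$ since $R>0$), while $\int_t^\infty$ converges precisely when $\frac{N-1}{p-1}>1$, that is when $p<N$. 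In every regime at least one of the two integrals is finite, so $E$ is $p$-traceable and Theorem \ref{Th2} applies.

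It then remains to evaluate the minimum in the three regimes and feed the result into (\ref{H2}). For $p>N$ the outer integral diverges, so the minimum is $\int_R^t$, which equals $\frac{p-1}{p-N}\big(t^{\frac{p-N}{p-1}}-R^{\frac{p-N}{p-1}}\big)$ and produces the first potential with constant $\big(\frac{p-N}{p}\big)^p$. For $p=N$ the exponent equals $1$, the outer integral again diverges, and $\int_R^t \xi^{-1}\, d\xi=\log(t/R)$ gives $\tau_N=r\log(r/R)$ and the logarithmic potential. For $1<p<N$ both integrals are finite, evaluating to $\frac{p-1}{N-p}\big(R^{\frac{p-N}{p-1}}-t^{\frac{p-N}{p-1}}\big)$ and $\frac{p-1}{N-p}\,t^{\frac{p-N}{p-1}}$ respectively; retaining their minimum produces the $\min\{\cdots\}$ expression and the constant $\big(\frac{N-p}{p}\big)^p$. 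In each case $\tau_p^p$ contributes the factor $r^{\frac{N-1}{p-1}p}$ from the prefactor, matching the stated denominators.

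I expect no serious obstacle: the argument is a direct specialisation of Theorem \ref{Th2}, and the only care required is bookkeeping in the case analysis — correctly identifying which branch of the minimum is active (equivalently, tracking the sign of $p-N$ in the antiderivative $\xi^{\frac{p-N}{p-1}}\big/\frac{p-N}{p-1}$) and confirming positivity, so that $\tau_p$ is a genuine positive distance. For $1<p<N$ the exponent $\frac{p-N}{p-1}$ is negative, so $t^{\frac{p-N}{p-1}}<R^{\frac{p-N}{p-1}}$ whenever $t>R$, which keeps both branches of the minimum positive. Finally, the passage from a single saturated spherical chart to all of $E$ is handled exactly as in the proof of Theorem \ref{Th2}, since covering $S^{N-1}$ requires several angular charts.
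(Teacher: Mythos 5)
Your proposal is correct and follows essentially the same route as the paper: spherical coordinates as normal coordinates with $t=r\in(R,\infty)$, $\Omega(t,s)=t^{N-1}f(s)$, computation of $\tau_p = t^{\frac{N-1}{p-1}}\int_{\{R,\infty\}}^{t}\xi^{-\frac{N-1}{p-1}}d\xi$ in the three regimes, and direct application of Theorem \ref{Th2}. Your case analysis (which branch of the minimum survives, governed by the sign of $p-N$) matches the paper's, and you in fact supply more of the elementary computation than the paper's terse ``the result follows.''
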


\begin{proof}
Similar to the case of $\mathbb{R}^N$ with the distance from a single point, the spherical coordinate system is a set of normal coordinates. The only difference now is that $t=r$ ranges from $R$ to $\infty$. Thus, we have $$\tau_p = t^{\frac{N-1}{p-1}} \int_{\{ R,\infty \}}^t \xi^{-\frac{N-1}{p-1}} d\xi,$$ so in each individual case
\[ 
\tau_p= \left\{
\begin{array}{ll}
      \frac{p-1}{p-N}r^\frac{N-1}{p-1}(r^\frac{p-N}{p-1}-R^\frac{p-N}{p-1}), & p>N \\
      r \log(r/R), & p=N.\\
      \frac{p-1}{N-p}r^\frac{N-1}{p-1}\min\{r^\frac{p-N}{p-1}, R^\frac{p-N}{p-1}-r^\frac{p-N}{p-1}\}, & p<N \\
\end{array} 
\right. 
\]
and the result follows.
\end{proof}

This is a non-trivial result, although its derivation has been trivialised by the use of our method. Let us make a few remarks on it. Note that in the cases where $p \neq N$, for small $r-R$ we have $\tau_p \approx r-R$, whereas for large $r-R$ we have $\tau_p \approx \frac{p-1}{|p-N|} (r-R)$. This fits our intuition: when close to the ball the inequality must behave like the one involving the distance from a hyperplane, while for very large distances it must resemble the one involving the distance from a point. In essence, the induced distance $\tau_p$ forms a continuous transition between these two limit cases.

It is also of practical importance to compare $\tau_p$ with the Euclidean distance from the boundary $d=r-R$. This will yield inequalities for the classic Hardy potential $V=d^{-p}$. To our knowledge, the only known result in this direction is given by Avkhadiev and Makarov in \cite{AM} (see also \cite{GPP} for alternative proofs of this result). The result states that for every compact $U \subseteq \mathbb{R}^N$, the best constant in the Hardy inequality $$\int_{\mathbb{R}^N \setminus U} |\nabla \varphi|^p dx \geq c \int_{\mathbb{R}^N \setminus U} \frac{|\varphi|^p}{d^p} dx,\ \ \ \varphi \in C^1_c(\mathbb{R}^N\setminus U),$$ is $c=\big( \frac{p-N}{p} \big)^p$ in the case where $p>N$, which implies the optimal inequality $$\int_E |\nabla \varphi|^p \geq \bigg( \frac{p-N}{p} \bigg)^p \int_E \frac{|\varphi|^p}{d^p},\ \ \ \varphi \in C^1_c(E)$$ in the case of the exterior of a ball. In that case our method gives $$\tau_p=\frac{p-1}{p-N}r^\frac{N-1}{p-1}(r^\frac{p-N}{p-1}-R^\frac{p-N}{p-1}).$$ To specify the best constant $\kappa$ such that $$\frac{1}{\tau_p} \geq \frac{\kappa}{d},$$ we make a few observations. As we already noted, we have $\tau_p(r)\approx d(r)$ for $r$ close to $R$ and $\tau_p(r)\approx \frac{p-1}{p-N} d(r)$ for large $r$. More generally, the derivative of $\tau_p(r)$ is given by $$\tau_p'(r)=\frac{p-1}{p-N}-\frac{N-1}{p-N} \bigg( \frac{R}{r} \bigg)^{\frac{p-N}{p-1}},$$ which is a strictly increasing function of $r$. It follows that $$\frac{p-1}{p-N} d(r) = \sup_{y>R}(\tau_p'(y) d(r) > \tau_p(r)$$ so $$\frac{1}{\tau_p} > \frac{p-N}{p-1} \frac{1}{d}$$ and we retrieve the same best constant $c= \big( \frac{p-N}{p} \big)^p$. It follows that our method improves the result of \cite{AM} in the case where $U$ is a ball, in the sense that it provides a better distance for the same constant.

For the case $p<N$, we have the following comparison.

\begin{corollary}
Let $E=\{x\in \mathbb{R}^N : |x|>R \}$. Then the inequality $$\int_E |\nabla \varphi|^p dx \geq \bigg( \frac{N-p}{p} \bigg)^p \big( 1-2^{-\frac{p-1}{N-p}} \big)^p \int_E \frac{|\varphi|^p}{d^p} dx,\ \ \ \varphi \in C^1_c(E)$$ holds for all $p<N$.
\end{corollary}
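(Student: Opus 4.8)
The plan is to obtain the corollary from the third ($1<p<N$) inequality of the preceding theorem by a single pointwise comparison. Writing $w(r)=r^{(N-1)/(p-1)}\min\{r^{(p-N)/(p-1)},\,R^{(p-N)/(p-1)}-r^{(p-N)/(p-1)}\}$, that inequality reads $\int_E|\partial\varphi/\partial r|^p\,dx\geq(\frac{N-p}{p})^p\int_E|\varphi|^p\,w^{-p}\,dx$. Since $|\nabla\varphi|\geq|\partial\varphi/\partial r|$ pointwise, it suffices to compare the weight $w^{-p}$ with $d^{-p}$, that is, to find the largest $\lambda>0$ with $w(r)\leq\lambda^{-1}d(r)$ for all $r>R$; equivalently $\lambda=\inf_{r>R}(r-R)/w(r)$, and the corollary then follows with constant $(\frac{N-p}{p})^p\lambda^p$. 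The goal is thus to prove $\lambda=1-2^{-(p-1)/(N-p)}$.

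I would next use the explicit two-branch shape of $w$. The two competing quantities in the minimum agree exactly at $r_\ast=R\,2^{(p-1)/(N-p)}$. For $r\geq r_\ast$ the minimum equals $r^{(p-N)/(p-1)}$, so $w(r)=r$ and the ratio $(r-R)/w(r)=1-R/r$ is increasing, hence smallest on this branch at $r_\ast$. For $R<r<r_\ast$ the minimum equals $R^{(p-N)/(p-1)}-r^{(p-N)/(p-1)}$, so $w(r)=r\big((R/r)^{(p-N)/(p-1)}-1\big)$ and, setting $x=R/r\in(R/r_\ast,1)$, the ratio becomes the one-variable expression $g(x)=(1-x)/(x^{(p-N)/(p-1)}-1)$.

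The key step, and the only one requiring genuine work, is to show that $g$ is monotone on $(R/r_\ast,1)$, so that the infimum over the near branch is again attained at $r_\ast$. I would do this by differentiating $g$ and tracking the sign of the numerator of $g'$; that numerator vanishes at $x=1$, so it is enough to verify it keeps a constant sign on the interval, an elementary estimate. Granting monotonicity, the global infimum of $(r-R)/w(r)$ occurs at $r_\ast$, where $(R/r_\ast)^{(p-N)/(p-1)}=2$ and therefore $g(R/r_\ast)=1-R/r_\ast=1-2^{-(p-1)/(N-p)}$. This is the sought value of $\lambda$.

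Feeding $\lambda=1-2^{-(p-1)/(N-p)}$ back into the inequality of the preceding theorem produces exactly the stated constant $(\frac{N-p}{p})^p(1-2^{-(p-1)/(N-p)})^p$. As a check that $r_\ast$ is where the comparison is tightest, note that $(r-R)/w(r)$ tends to $\frac{p-1}{N-p}$ as $r\to R$ and to $1$ as $r\to\infty$, both strictly larger than the crossover value $1-2^{-(p-1)/(N-p)}$ (the elementary inequality $1-2^{-\mu}<\mu$ for $\mu>0$ handles the first limit). I expect the monotonicity of $g$ on the near branch to be the main obstacle; once it is secured, the remainder is routine bookkeeping.
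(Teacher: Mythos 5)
Your proposal is correct and follows essentially the same route as the paper: both reduce the corollary to a sharp pointwise comparison between the weight and $d=r-R$, locate the extremal point at the branch crossover $r_\ast = 2^{(p-1)/(N-p)}R$, and read off the constant there. The only difference is mechanical --- the paper proves the inner-branch bound by observing that $\tau_p'$ is increasing, hence $\tau_p$ is convex and lies below its chord through $(R,0)$ and $(r_\ast,\tau_p(r_\ast))$, whereas you differentiate the ratio $g$ directly; your deferred sign check does go through, since with $\mu=\tfrac{p-1}{N-p}$ the numerator of $g'$ vanishes at $x=1$ and has derivative $-\tfrac{1}{\mu}\bigl(\tfrac{1}{\mu}+1\bigr)(1-x)x^{-1/\mu-2}<0$, so it is positive on $(0,1)$.
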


\begin{proof}
In this case, it is $$\tau_p=\frac{p-1}{N-p}r^\frac{N-1}{p-1}\min\{r^\frac{p-N}{p-1}, R^\frac{p-N}{p-1}-r^\frac{p-N}{p-1}\}.$$ We put $a=2^\frac{p-1}{N-p} R$, which is the real number such that $$a^\frac{p-N}{p-1}=R^\frac{p-N}{p-1}-a^\frac{p-N}{p-1},$$ i.e. the point in which the branch transition occurs. It follows that
\[ 
\tau_p= \left\{
\begin{array}{ll}
      \frac{p-1}{N-p}r, & r\geq a \\
      \frac{p-1}{N-p}r^\frac{N-1}{p-1}(R^\frac{p-N}{p-1}-r^\frac{p-N}{p-1}), & r<a \\
\end{array} .
\right. 
\]
An elementary calculation reveals that the derivative of $\tau_p(r)$ for $r<a$ is $$\tau_p'(r)=\frac{N-1}{N-p} ( r/R)^\frac{N-p}{p-1} - \frac{p-1}{N-p},$$ which is strictly increasing, so in particular $\tau_p(r)$ is convex for $r<a$. By virtue of Jensen's inequality it follows that $$\tau_p(r) \leq A (r-R),\ \ \ R<r<a,$$ where $$A=\frac{\tau_p(a)-\tau_p(R)}{a-R} = \frac{p-1}{N-p} \frac{a}{a-R} = \frac{p-1}{N-p} \big( 1-2^{-\frac{p-1}{N-p}} \big)^{-1}.$$

As for the region $r \geq a$, we certainly have that $\tau_p(r) \leq A (r-R)$, since both functions are affine, share the same value at $a$ and $\frac{p-1}{N-p}<A$.

So in any case we have $$\frac{1}{\tau_p} \geq \frac{N-p}{p} \frac{1-2^{-\frac{p-1}{N-p}}}{d}$$ and the result follows.
\end{proof}

For the sake of clarity, we give some plots of the function $\tau_p$ for specific values of $N$ and $p$, plotted against the function $\frac{p-1}{|p-N|} (r-R)$ that we use when making the Euclidean comparison (see Figure 1 below). Other choices of $N$ and $p$ give qualitatively similar results. What really matters is whether $p<N$ or $p>N$.

\begin{figure}[h]
\centering
\includegraphics[scale=0.45]{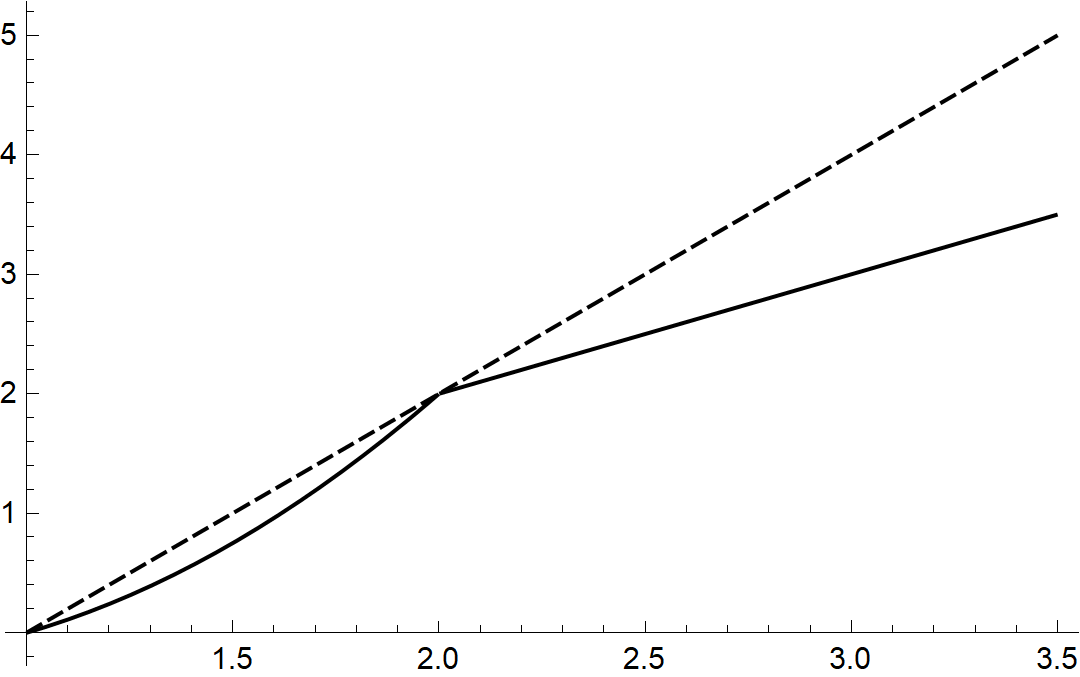}
\includegraphics[scale=0.45]{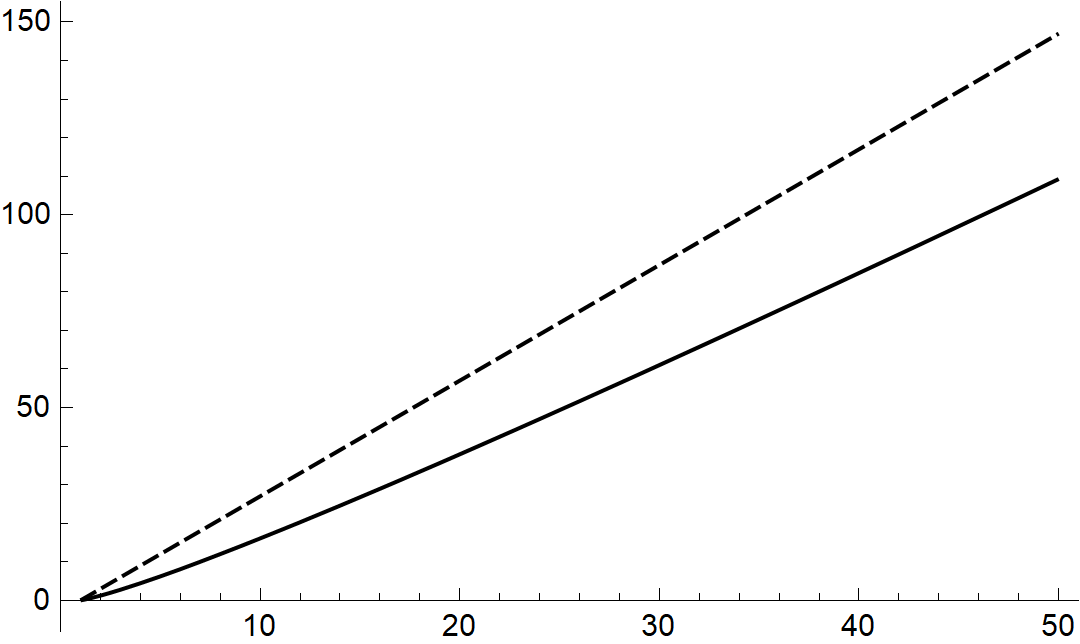}
\caption{Left: $R=1,N=3,p=2$. Right: $R=1,N=3,p=4$}
\end{figure}

\section{Application II: Spherical symmetry}

Moving beyond the classic Euclidean setting, the most important class of examples is arguably the class of spherically symmetric manifolds. We say that a Riemannian manifold $(M,g)$ is (locally) spherically symmetric around a central point $o\in M$ if the metric can be expressed as $$g=d\rho \otimes d\rho +\psi^2(\rho) g_{S^{N-1}}$$ in a punctured neighbourhood of $o$, where $\rho = \textrm{dist}(\cdot, o)$ is the Riemannian distance from $o$, $\psi$ is a positive function depending only on $\rho$ and $g_{S^{N-1}}$ is the round metric of the unit sphere of codimension 1. We are interested in the case where we have global spherical symmetry.

If $M$ is non-compact, the above polar representation extends to the whole punctured space $M'=M\setminus \{ o \}$. If $M$ is compact, we must exclude an additional ``antipodal" point $o' \in M$ (the most characteristic example is the sphere, where one must exclude both poles).

In either case, $\rho:M' \rightarrow \mathbb{R}$ has range of the form $(0,R)$ (we may have $R=+\infty$), and we may apply Theorem \ref{Th2} with $X=\partial / \partial \rho$ and $\omega = \omega_g = \psi^{N-1}(\rho) d\rho\wedge \omega_{S^{N-1}}$. In the following, we also take into account the case where we choose to exclude not only the ``pole(s)'' $o$ (and $o'$), but perhaps a larger object (for example, a geodesic ball around $o$ or $o'$).

\begin{theorem}
Suppose that $(M',g)$ is a Riemannian manifold whose metric can be expressed as $$g=d\rho \otimes d\rho +\psi^2(\rho) g_{S^{N-1}}$$ for some $\rho:M' \rightarrow (a,b)$ and some smooth $\psi:(a,b) \rightarrow (0,\infty)$. If for each value of $\rho \in (a,b)$, either one (or both) of the integrals $$\int_a^\rho \psi^{-\frac{N-1}{p-1}}(\xi)d\xi,\  \int_\rho^b \psi^{-\frac{N-1}{p-1}}(\xi)d\xi$$ converge, the inequality $$\int_{M'} |\partial_\rho \varphi|^p \omega_g \geq \bigg( \frac{p-1}{p} \bigg)^p \int_{M'} \frac{|\varphi|^p}{\varpi_p^p} \omega_g,\ \ \ \varphi \in C^1_c(M')$$ is valid with $$\varpi_p = \psi^{\frac{N-1}{p-1}}(\rho) \min \bigg( \int_a^\rho \psi^{-\frac{N-1}{p-1}}(\xi)d\xi,  \int_\rho^b \psi^{-\frac{N-1}{p-1}}(\xi)d\xi \bigg).$$
\end{theorem}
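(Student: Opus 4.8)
The plan is to recognise this statement as essentially a direct application of Theorem \ref{Th2}, where the only genuine content is the explicit identification of the normal coordinates, the computation of the local volume density, and the resulting closed form for $\tau_p$. First I would set $X = \partial/\partial\rho$ and $\omega = \omega_g = \psi^{N-1}(\rho)\, d\rho \wedge \omega_{S^{N-1}}$, and observe that because $\partial/\partial\rho = X$ the radial distance $\rho$ itself serves as the $t$-coordinate of a normal system. Concretely, I would pick any local chart $s=(s^1,\ldots,s^{N-1})$ on $S^{N-1}$, write $\omega_{S^{N-1}} = h(s)\, ds$ with $h>0$, and note that $(\rho, s)$ is then a normal coordinate chart. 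The integral curves of $X$ are the radial rays, each parametrised by $\rho$ ranging over the \emph{full} interval $I_s = (a,b)$ independently of the direction $s$, so these charts are saturated; since finitely many angular charts cover $S^{N-1}$, their products with $(a,b)$ cover $M'$, which is all the covering machinery of Theorem \ref{Th2} requires.

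Next I would compute the local volume density directly from the product expression for $\omega$, obtaining $\Omega(t,s) = \psi^{N-1}(t)\, h(s)$. The crucial observation is that the angular density $h(s)$ is strictly positive and independent of $t$, so it pulls out of the integral defining $\tau_p$ and then cancels against the prefactor:
\[
\tau_p = \Omega(t,s)^{\frac{1}{p-1}} \int_{\partial I_s}^t \Omega(\xi,s)^{-\frac{1}{p-1}}\, d\xi = \psi^{\frac{N-1}{p-1}}(\rho) \int_{\partial (a,b)}^\rho \psi^{-\frac{N-1}{p-1}}(\xi)\, d\xi,
\]
where the two factors $h(s)^{\pm 1/(p-1)}$ have cancelled exactly. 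Unwinding the notation $\int_{\partial(a,b)}^\rho = \min\bigl(\int_a^\rho, \int_\rho^b\bigr)$ then gives precisely $\tau_p = \varpi_p$, which is the asserted formula.

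It remains to match the hypotheses and close the argument. The $p$-traceability required by Theorem \ref{Th2} is the condition that $\int_{\partial I_s}^t \Omega(\xi,s)^{-\frac{1}{p-1}}\, d\xi < \infty$ everywhere; since $h(s)$ is positive and $t$-independent, this is finite if and only if $\min\bigl(\int_a^\rho \psi^{-\frac{N-1}{p-1}}, \int_\rho^b \psi^{-\frac{N-1}{p-1}}\bigr)$ is finite, i.e. if and only if at least one of the two displayed integrals converges — which is exactly the stated assumption. Finally, because $|\partial/\partial\rho|_g = 1$ we have $X\varphi = \partial_\rho\varphi$, so $|X\varphi|^p = |\partial_\rho\varphi|^p$, and Theorem \ref{Th2} yields the claimed inequality verbatim with $\tau_p = \varpi_p$. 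The only point that demands any care — and hence the mild obstacle — is the bookkeeping around the angular charts: one must verify that the positive factor $h(s)$ genuinely decouples from the radial integration so that $\tau_p$ depends on $\rho$ alone. This is guaranteed by the product structure of $\omega_g$ together with the coordinate-independence of $\tau_p$ already established in Section 4, so no further analytic work is needed.
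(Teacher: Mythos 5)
Your proposal is correct and follows the same route as the paper, which simply observes that the theorem is Theorem \ref{Th2} applied to $(M,X,\omega)=(M',\partial_\rho,\omega_g)$; you have merely written out the details (choice of normal coordinates, cancellation of the angular density $h(s)$, and the matching of $p$-traceability with the stated integral condition) that the paper leaves implicit. The only cosmetic quibble is that $X\varphi=\partial_\rho\varphi$ holds because $X=\partial/\partial\rho$ as a derivation, not because $|\partial/\partial\rho|_g=1$.
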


\begin{proof}
This is just a restatement of Theorem \ref{Th2} for the special case $(M,X,\omega) = (M',\partial_\rho,\omega_g)$.
\end{proof}

$M'$ can be thought of as a suitable open submanifold of a spherically symmetric manifold $M$. A key feature of our technique is that it effectively manages to take into account the volumetric/temporal distance from both the ``inner'' and the ``outer'' edge of the manifold. By ``inner'' edge we mean the edge that is closer to the central point $o$. The volumetric/temporal distance from the inner edge is given by $$\varpi_p^{\textrm{in}} = \psi^{\frac{N-1}{p-1}}(\rho)  \int_a^\rho \psi^{-\frac{N-1}{p-1}}(\xi)d\xi,$$ while the corresponding distance from the outer edge is $$\varpi_p^{\textrm{out}} = \psi^{\frac{N-1}{p-1}}(\rho) \int_\rho^b \psi^{-\frac{N-1}{p-1}}(\xi)d\xi.$$ While it is true that $\varpi_p = \min ( \varpi_p^{\textrm{in}}, \varpi_p^{\textrm{out}} )$, and consequently $$\frac{1}{\varpi_p} \geq \frac{1}{\varpi_p^{\textrm{in}}}, \frac{1}{\varpi_p^{\textrm{out}}},$$ it is sometimes convenient to consider Hardy potentials that take into account only the inner or outer edge. One may choose to do this in order to extend the class of admissible functions (in the case of a compact manifold where we have an antipodal point $o'$, one may still prefer to take into account functions that do not vanish at $o'$).

To this end, this is a good point to demonstrate the flexibility of our method: all that Theorem \ref{Th2} does is to essentially ``lift'' the one-dimensional Hardy inequality ($\star$) in higher dimensions.  As a matter of fact, \textit{any} one-dimensional functional inequality could be used in its place. Without straying from our subject of Hardy inequalities, we simply point out that one gets nearly identical results if we choose instead to lift the inequality $$(\star \star) \ \ \ \int_a^b |\varphi'(x)|^p dx \geq \bigg( \frac{p-1}{p} \bigg)^p \int_a^b \frac{|\varphi(x)|^p}{(x-a)^p} dx, \ \ \ \varphi\in C^1_c((a,b]),$$ which takes into account only the first endpoint and admissible functions need not vanish close to $b$. This gets us exactly what we need.

\begin{theorem}
Let $(M,g)$ be a compact, spherically symmetric manifold with empty boundary, with central point $o \in M$ of injectivity radius $\textrm{inj}(o)=R$, $\rho = \textrm{dist}(\cdot,o)$ and let $$g=d\rho \otimes d\rho +\psi^2(\rho) g_{S^{N-1}}$$ for some smooth $\psi:(0,R) \rightarrow (0,\infty)$. Then the inequality $$\int_{M} |\partial_\rho \varphi|^p \omega_g \geq \bigg( \frac{p-1}{p} \bigg)^p \int_{M} \frac{|\varphi|^p}{(\varpi_p^\textrm{in})^p} \omega_g,\ \ \ \varphi \in C^1_c(M\setminus \{ o \})$$ is valid whenever $$\int_0^\rho \psi^{-\frac{N-1}{p-1}}(\xi)d\xi < \infty.$$
\end{theorem}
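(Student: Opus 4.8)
The plan is to reprise the proof of Theorem \ref{Th2} essentially verbatim, making the single substitution of the one-sided one-dimensional inequality $(\star\star)$ for the symmetric inequality $(\star)$, with the unit-speed radial geodesics emanating from $o$ playing the role of the integral curves. Concretely, I would take $X = \partial_\rho$ and $\omega = \omega_g = \psi^{N-1}(\rho)\, d\rho \wedge \omega_{S^{N-1}}$ and work on the open, dense, full-measure subset $M \setminus (\{o\} \cup \mathrm{Cut}(o))$ on which the polar representation is valid. Since $X = \partial_\rho$, the radial coordinate $t = \rho$ is already a normal time-coordinate with $I_s = (0,R)$ for every angular parameter $s \in S^{N-1}$, and the local volume density factors as $\Omega(\rho,s) = \psi^{N-1}(\rho)\,\alpha(s)$, where $\alpha$ is the density of $\omega_{S^{N-1}}$.

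The key point that licenses $(\star\star)$ rather than $(\star)$ is the behaviour of a test function along a radial geodesic. Such a geodesic $\gamma_s$ is smooth on the closed interval $[0,R]$ and reaches the cut locus as $\rho \to R$; since $\varphi \in C^1_c(M\setminus\{o\})$, the restriction $\varphi\circ\gamma_s$ is $C^1$ on $(0,R]$, vanishes for $\rho$ near $0$ (its support avoids $o$), yet need not vanish up to and including $\rho = R$. Hence $\varphi\circ\gamma_s \in C^1_c((0,R])$, which is exactly the admissible class for $(\star\star)$ with $a=0$ and $b=R$. This is the freedom we are after: admissible functions may remain non-zero on the antipodal region.

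From there I would follow the mechanics of Theorem \ref{Th2}: pass to the associated $p$-normal coordinates introduced earlier, apply $(\star\star)$ along each curve so that the one-sided distance $t'-a'$ (with $a'$ the image of $\rho=0$) replaces $\mathrm{dist}(t',\partial I_{s'})$, multiply by $\alpha(s)$, integrate over $S^{N-1}$, and reassemble via Fubini and the diffeomorphism invariance of integration on forms. By the same computation as in Theorem \ref{Th2}, and once the angular factors $\alpha(s)^{\pm 1/(p-1)}$ cancel, the one-sided distance translates back into $\varpi_p^{\mathrm{in}} = \psi^{\frac{N-1}{p-1}}(\rho)\int_0^\rho \psi^{-\frac{N-1}{p-1}}(\xi)\,d\xi$, which is finite precisely under the stated hypothesis (inner $p$-traceability).

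The main obstacle I expect is the bookkeeping at $\mathrm{Cut}(o)$, where the polar coordinates degenerate and all radial geodesics collapse. I would dispose of this by noting that $\mathrm{Cut}(o)$ has measure zero, so integration over $M$ and over $M\setminus(\{o\}\cup\mathrm{Cut}(o))$ coincide, and---crucially---that $(\star\star)$ contributes no boundary term at the right endpoint $b=R$, so the possibly non-zero values of $\varphi$ there create no difficulty. The gluing of saturated charts over the non-trivial sphere $S^{N-1}$ then proceeds exactly as in Theorems \ref{Th1} and \ref{Th2} through Lemma A.2, and the constant $\left(\frac{p-1}{p}\right)^p$ is inherited unchanged from $(\star\star)$.
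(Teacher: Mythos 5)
Your proposal is correct and follows essentially the same route as the paper: the paper's proof likewise observes that the polar representation covers $M\setminus\{o,o'\}$ (noting that $\rho^{-1}(R)$ is the single antipodal point $o'$, which is your cut locus) and then repeats the argument of Theorem \ref{Th2} with $(\star\star)$ substituted for $(\star)$. Your additional remarks on why $\varphi\circ\gamma_s\in C^1_c((0,R])$ and on the measure-zero cut locus are correct elaborations of what the paper leaves implicit.
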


\begin{proof}
It is well known that in this case we have $\rho^{-1}(R)= \{ o' \}$ where $o'$ is a single point antipodal to $o$. It follows that $M\setminus \{ o,o' \}$ can be covered with polar coordinates in which the metric is expressed exactly as in the statement of the theorem. The rest of the proof is a repetition of the steps in the proof of Theorem \ref{Th2}, the only difference being applying $(\star \star)$ instead of $(\star)$.
\end{proof}

Of special interest are the cases of the $N$-Sphere $\mathbb{S}^N$, where $\psi(\theta)=\sin(\theta)$, $\theta\in (0,\pi)$, and the Hyperbolic Space $\mathbb{H}^N$, where $\psi(\rho) = \sinh(\rho)$, $\rho\in (0,\infty)$.

\begin{remark}
It recently came to our attention that this is not the first time that results such as these make their appearance. Other authors have employed analytic methods to obtain such results in a number of cases. For example, in \cite{CR}, the authors present some results for spheres and spherically symmetric domains that are very similar to our own. In \cite{BAGG}, the authors use a general result from \cite{DD} to derive an $L^p$ Hardy potential for the hyperbolic space that also has the same form as the one that occurs from our method. More generally, in the spherically symmertic case, the Hardy potentials that we are looking at are all of the form $|\nabla \rho|^p/\rho^p$ for some $p$-harmonic $\rho \in W^{1,p}(M)$, and can therefore be considered a special case of the main result in \cite{DD}.

Regardless, our method is inherently geometric instead of analytic and applies more generally, for example $X$ and $\omega$ need not be related by a Riemannian metric. Moreover, the potentials provided by our method are explicit in any case, symmetric or not.
\end{remark}

\section{Application III: The exterior of a black hole}

As a final application, we would like to discuss the case of the Schwarzschild metric, which describes static black holes in the context of General Relativity. The full Schwarzschild metric in (3+1)-dimensional spacetime reads $$-\bigg(1- \frac{1}{r} \bigg)dt \otimes dt + \bigg(1- \frac{1}{r} \bigg)^{-1} dr \otimes dr + r^2 g_{\mathbb{S}^2}$$ and is actually a pseudo-Riemannian metric. To get a Riemannian metric, we will simply restrict our attention on ``temporal slices" of constant time, where the restricted metric reads $$\bigg(1- \frac{1}{r} \bigg)^{-1} dr \otimes dr + r^2 g_{\mathbb{S}^2}.$$

\begin{theorem}[Hardy Inequality for the Schwarzschild Black Hole]
Let $\mathfrak{B} = \{ x\in \mathbb{R}^3 : |x|>1$ \} be equipped with the metric $$g_{\mathfrak{B}} = \frac{r}{r-1} dr \otimes dr + r^2 g_{\mathbb{S}^2}$$ as above, let $\nabla_\mathfrak{B}$ and $\omega_\mathfrak{B}$ stand for the Riemannian gradient and volume form, respectively, and let
\[ 
\delta= \left\{
\begin{array}{ll}
      2r^2\sqrt{\frac{r-1}{r}} & 1<r<(4/3) \\
      2r^2 \bigg( 1-\sqrt{\frac{r-1}{r}} \bigg) & r \geq (4/3)\\
\end{array} 
\right. .
\]
Then the inequality $$\int_{\mathfrak{B}} |\nabla_{\mathfrak{B}} \varphi |^2 \omega_{\mathfrak{B}} \geq \int_\mathfrak{B} \frac{r-1}{r} \bigg| \frac{\partial \varphi}{\partial r} \bigg|^2 \omega_\mathfrak{B}  \geq \frac{1}{4} \int_{\mathfrak{B}} \frac{|\varphi|^2}{\delta^2} \omega_{\mathfrak{B}}$$ is valid for all $\varphi \in C^1_c(\mathfrak{B})$.
\end{theorem}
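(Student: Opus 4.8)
The plan is to split the claimed chain of inequalities into two parts. The leftmost inequality is purely pointwise: in the coordinates $(r,\theta,\phi)$ the inverse metric is diagonal with $g^{rr}=\frac{r-1}{r}$, so
\[
|\nabla_{\mathfrak{B}}\varphi|^2=\frac{r-1}{r}(\partial_r\varphi)^2+\frac{1}{r^2}|\nabla_{\mathbb{S}^2}\varphi|^2\geq\frac{r-1}{r}(\partial_r\varphi)^2,
\]
the angular term being non-negative. Integrating against $\omega_{\mathfrak{B}}$ yields the first inequality at once. The substance of the theorem is therefore the second inequality, which I would obtain as a direct application of Theorem \ref{Th2} with $p=2$.

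To set up the application, I would take the direction field $X=\sqrt{\frac{r-1}{r}}\,\partial_r$, so that $|X\varphi|^2=\frac{r-1}{r}|\partial_r\varphi|^2$ reproduces exactly the middle integrand, and keep $\omega=\omega_{\mathfrak{B}}$. Normal coordinates $(t,s)$ are produced by setting $s=(\theta,\phi)$ and defining $t$ through $\frac{dt}{dr}=\sqrt{\frac{r}{r-1}}$, which is integrable near $r=1$ and grows linearly as $r\to\infty$, so the $t$-interval is $(t_{\min},\infty)$ with $t_{\min}$ finite. A short computation of $\sqrt{\det g}$ gives $\omega_{\mathfrak{B}}=\frac{r^{5/2}}{\sqrt{r-1}}\sin\theta\,dr\wedge d\theta\wedge d\phi$, and substituting $dr=\sqrt{\frac{r-1}{r}}\,dt$ collapses this to $\omega_{\mathfrak{B}}=r^2\sin\theta\,dt\wedge d\theta\wedge d\phi$. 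Hence the local volume density is $\Omega(t,s)=r^2\sin\theta$, which genuinely depends on $t$ through $r=r(t)$; the space is thus non-simple and the general machinery is required.

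It then remains to compute $\tau_2$ and identify it with $\delta$. Since $p=2$ we have $\tau_2=\Omega\int_{\partial I_s}^t\Omega^{-1}\,d\xi$, and the factor $\sin\theta$ cancels, leaving $\tau_2=r^2\int_{\partial I_s}^t\frac{d\xi}{r(\xi)^2}$. Changing the variable of integration back to $r$ via $d\xi=\sqrt{\frac{r}{r-1}}\,dr$ turns the integrand into $r^{-3/2}(r-1)^{-1/2}$, whose antiderivative is $2\sqrt{\frac{r-1}{r}}$ (verified by the substitution $u=\sqrt{(r-1)/r}$). Since the endpoints of $I_s$ correspond to $r=1$ and $r=\infty$, the two one-sided integrals are $2\sqrt{\frac{r-1}{r}}$ (from the inner edge) and $2-2\sqrt{\frac{r-1}{r}}$ (from the outer edge), both finite, so the space is $2$-traceable. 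Taking their minimum, the branches swap exactly where $\sqrt{(r-1)/r}=\frac{1}{2}$, i.e.\ at $r=4/3$, and multiplying by $r^2$ reproduces $\delta$ in both regimes. Theorem \ref{Th2} then gives $\int_{\mathfrak{B}}|X\varphi|^2\omega_{\mathfrak{B}}\geq\frac{1}{4}\int_{\mathfrak{B}}\frac{|\varphi|^2}{\delta^2}\omega_{\mathfrak{B}}$, which is precisely the second inequality.

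The computation is largely mechanical; the one step requiring genuine attention is the behaviour at the outer edge. Although $r\to\infty$ lies at temporal infinity ($t\to\infty$), the weighted integral $\int_t^\infty\Omega^{-1}\,d\xi$ converges because the integrand decays like $r^{-2}$. It is precisely this convergence that makes the outer branch $2r^2\bigl(1-\sqrt{(r-1)/r}\bigr)$ appear and forces the two-regime form of $\delta$; overlooking it would wrongly suggest that only the inner edge contributes. I would therefore highlight the verification of this convergence, and hence of $2$-traceability, as the key point, the remaining algebra being routine.
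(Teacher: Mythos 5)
Your proposal is correct and follows essentially the same route as the paper: the same choice $X=\sqrt{(r-1)/r}\,\partial_r$, the same normal coordinate defined by $dt/dr=\sqrt{r/(r-1)}$ (the paper just writes the antiderivative $f(r)=\sqrt{r}\sqrt{r-1}+\log(\sqrt{r}+\sqrt{r-1})$ explicitly), the same reduction of $\tau_2$ to $r^2\int_{\{1,\infty\}}^r \xi^{-3/2}(\xi-1)^{-1/2}\,d\xi$, and the same identification with $\delta$ via the antiderivative $2\sqrt{(r-1)/r}$. Your explicit verification of the pointwise first inequality and of $2$-traceability at the outer edge only makes explicit what the paper leaves implicit.
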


\begin{proof}
Let $X=\sqrt{\frac{r-1}{r}} \frac{\partial}{\partial r}$. In polar coordinates we have $$\omega_\mathfrak{B}= \sqrt{\frac{r}{r-1}}r^2 \sin(\theta) dr \wedge d\theta \wedge d\phi.$$ We are looking for a new coordinate $t$ to replace $r$ such that $\partial/\partial t = X$. Let $f:(1,\infty) \rightarrow (0,\infty)$ be the function given by the formula $$f(x) = \sqrt{x}\sqrt{x-1}+\log(\sqrt{x}+\sqrt{x-1}).$$ It is easy to verify that $t=f(r)$ satisfies the imposed condition, therefore $(t,\theta,\phi)$ is a set of normal coordinates for $(\mathfrak{B},X)$. As $f$ is a bijection, let $g$ denote its inverse. Substituting $r=g(t)$ into the formula for $\omega_\mathfrak{B}$, we get $$\omega_\mathfrak{B}=g(t)^2 \sin(\theta) dt \wedge d\theta \wedge d\phi,$$ therefore $\Omega(t,\theta,\phi) = g(t)^2\sin(\theta)$. The temporal/volumetric distance in this case is $$\tau_2 = g(t)^2 \int_{\{0,\infty \}}^t g(w)^{-2} dw = r^2 \min \bigg( \int_0^t g(w)^{-2} dw,\int_t^\infty g(w)^{-2} dw \bigg).$$ Substituting $w=f(\xi)$, it is elementary to show that $$\tau_2= r^2 \int_{\{1,\infty\}}^r \frac{d\xi}{\xi^{3/2}(\xi-1)^{1/2}} = \delta$$ and the proof is complete.
\end{proof}

A more complete treatment of this matter will be given elsewhere.

\section{Higher-order inequalities}
Likewise, one can recursively obtain inequalities for higher order  differential operators. For example, consider the second-order operator $YX$ obtained by the composition of two directional derivatives (vector fields) $X,Y \in \Gamma(TM)$. If $(M,Y,\omega)$ is $p$-traceable, we obtain $$\int_M |YX \varphi|^p \omega \geq \bigg( \frac{p-1}{p} \bigg)^p \int_M \frac{|X\varphi|^p}{(\tau^Y_p)^p} \omega = \int_M \bigg| \frac{X}{\tau^Y_p} \varphi \bigg|^p \omega,$$ where $\tau^Y_p$ is the temporal/volumetric distance of $(M,Y,\omega)$. In the same manner, if $(M,X/\tau^Y_p,\omega)$ is $p$-traceable, we may repeat the process and obtain $$\int_M |YX \varphi|^p \omega \geq \bigg( \frac{p-1}{p} \bigg)^{2p} \int_M \frac{|\varphi|^p}{(\tau^{Y/\tau^X_p}_p)^p} \omega,$$ where $\tau^{Y/\tau^X_p}_p$ is the temporal/volumetric distance for $(M,X/\tau^Y_p,\omega)$. By induction, this process can produce inequalities for operators of the form $X_1 \cdots X_k$ for any $k\in \mathbb{N}$, provided that $p$-traceability holds for each step.

We give some examples of higher-order inequalities obtained in this way.

\begin{example}
 Recursive application of the weighted inequality of Example 5.3 yields the $k$-th order Rellich inequality $$\int_{\mathbb{R}^N} \bigg| \frac{\partial^k \varphi}{\partial r^k} \bigg|^p dx \geq \prod_{l=1}^{k} \bigg| \frac{lp-N}{p} \bigg|^p \int_{\mathbb{R}^N} \frac{|\varphi|^p}{r^{kp}} dx, \ \ \ \varphi \in C^1_c(\mathbb{R}^N \setminus \{ 0 \}).$$ Note that, in essence, if one has weighted inequalities for the vector fields of interest, computing the distance at each step becomes unnecessary.
 
 Likewise, for the one-dimensional case we have $$\int_{\mathbb{R}_+} |D^k \varphi|^p dx \geq \prod_{l=1}^{k} \bigg( \frac{lp-1}{p} \bigg)^p \int_{\mathbb{R}_+} \frac{|\varphi|^p}{x^{kp}} dx, \ \ \ \varphi \in C^1_c(\mathbb{R}_+),$$ which can be further integrated to give the same inequality for the half-space.
\end{example}

\begin{example}
Consider the second order differential operator $$H= \frac{1}{r} \frac{\partial}{\partial r} r \frac{\partial}{\partial r}.$$ Applying the weighted inequality of Example 5.3 twice yields the inequality $$\int_{\mathbb{R}^N} |H\varphi|^p dx \geq \bigg| \frac{2p-N}{p} \bigg|^{2p} \int_{\mathbb{R}^N} \frac{|\varphi|^p}{r^{2p}} dx, \ \ \ \varphi \in C^1_c(\mathbb{R}^N \setminus \{ 0 \}).$$
\end{example}

As a final interesting application, we will use the above to obtain Rellich inequalities involving the wave operator in the 2-dimensional half-space, which, in contrast to most operators that are being discussed in literature, is not an elliptic operator. We are not aware of other results of this type so far. We prove the following.

\begin{theorem}[Higher-order Rellich Inequality for the Wave Operator]
Let $\square = \partial_x^2 - \partial_y^2$ denote the 2-dimensional wave operator, and let $u \in C^\infty_c (\mathbb{R}^2_+)$. Then the inequality $$\int_{\mathbb{R}^2_+} |\square^k u|^p dxdy \geq \prod_{l=1}^{2k} \bigg( \frac{lp-1}{p} \bigg)^p \int_{\mathbb{R}^2_+} \frac{|u|^p}{y^{2kp}} dxdy$$ holds for all $k \in \mathbb{N}$. The constant is sharp.
\end{theorem}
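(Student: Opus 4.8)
The plan is to factor the wave operator into commuting first-order directional derivatives and then iterate a weighted form of Theorem \ref{Th2}. Writing $X = \partial_x + \partial_y$ and $Y = \partial_x - \partial_y$, the mixed second partials cancel, so $YX = XY = \partial_x^2 - \partial_y^2 = \square$. Because $X$ and $Y$ commute, $\square^k = (YX)^k = Y^k X^k$ is a composition of $2k$ first-order operators, and every intermediate function $Y^j X^i u$ lies again in $C^\infty_c(\mathbb{R}^2_+)$, hence is admissible in all the inequalities below.

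First I would establish, for each field, a one-parameter family of weighted Hardy inequalities. Fix $\beta \geq 0$ and consider the directed volume space $(\mathbb{R}^2_+, X, \omega_\beta)$ with $\omega_\beta = y^{-\beta}\, dx \wedge dy$. The integral curves of $X$ are the lines $x - y = \mathrm{const}$, so $(t,s) = (y, x-y)$ are normal coordinates with $I_s = (0,\infty)$ and local density $\Omega(t,s) = t^{-\beta}$. A direct computation of the temporal/volumetric distance gives
$$\tau_p = t^{-\frac{\beta}{p-1}} \int_{\{0,\infty\}}^{t} \xi^{\frac{\beta}{p-1}}\, d\xi = \frac{p-1}{\beta+p-1}\, y,$$
the endpoint at $\infty$ contributing nothing to the minimum, and $p$-traceability holding because $\beta \geq 0$ makes $\int_0^t \xi^{\beta/(p-1)}\,d\xi$ convergent. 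Feeding this into Theorem \ref{Th2} yields
$$\int_{\mathbb{R}^2_+} |X\varphi|^p\, y^{-\beta}\, dx\, dy \geq \bigg(\frac{\beta+p-1}{p}\bigg)^p \int_{\mathbb{R}^2_+} \frac{|\varphi|^p}{y^{\beta+p}}\, dx\, dy, \qquad \varphi \in C^1_c(\mathbb{R}^2_+),$$
and the identical estimate holds for $Y$, whose integral curves are the lines $x + y = \mathrm{const}$.

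Next I would iterate, peeling off the $2k$ factors of $\square^k = Y^k X^k$ from the outside in. Applying the weighted inequality for $Y$ with $\beta = (j-1)p$ to $Y^{k-j}X^k u$ advances the weight from $y^{-(j-1)p}$ to $y^{-jp}$ and contributes the factor $\big(\frac{jp-1}{p}\big)^p$; running $j = 1,\ldots,k$ reduces the problem to $\prod_{j=1}^{k}\big(\frac{jp-1}{p}\big)^p \int |X^k u|^p y^{-kp}$. Continuing with the inequality for $X$ at weights $\beta = (k+i-1)p$ for $i = 1,\ldots,k$ contributes the factors $\big(\frac{(k+i)p-1}{p}\big)^p$ and lands on the weight $y^{-2kp}$ and the bare function $u$. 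The two runs telescope into $\prod_{l=1}^{2k}\big(\frac{lp-1}{p}\big)^p$, which is exactly the claimed inequality.

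Finally, for sharpness I would test with $u_R(x,y) = \chi(x/R)\varphi(y)$, where $\chi \in C^\infty_c(\mathbb{R})$ equals $1$ near $0$ and $\varphi \in C^\infty_c(0,\infty)$. Since
$$\square^k u_R = \sum_{m=0}^{k} \binom{k}{m}(-1)^{k-m} R^{-2m}\,\chi^{(2m)}(x/R)\,\varphi^{(2(k-m))}(y),$$
every term with $m \geq 1$ carries a factor $R^{-2m}$ and is negligible, by Minkowski's inequality, against the leading term $(-1)^k \chi(x/R)\varphi^{(2k)}(y)$ as $R \to \infty$. Dividing by $\int |u_R|^p y^{-2kp}$, the $x$-integrals cancel and the ratio tends to $\int_0^\infty |\varphi^{(2k)}|^p \big/ \int_0^\infty |\varphi|^p y^{-2kp}$; optimising over $\varphi$ in the one-dimensional order-$2k$ Rellich inequality established earlier in this section, whose constant $\prod_{l=1}^{2k}\big(\frac{lp-1}{p}\big)^p$ is known to be optimal, shows the constant cannot be improved. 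The recursion itself is mechanical; the main obstacle is the sharpness step, where one must control the cross terms produced by $\square^k$ striking the $x$-cutoff and confirm that they vanish in the limit, so that the genuinely two-dimensional problem collapses onto the sharp one-dimensional Rellich inequality.
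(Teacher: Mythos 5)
Your proposal is correct and follows essentially the same route as the paper: factor $\square=(\partial_x+\partial_y)(\partial_x-\partial_y)$, derive a weighted Hardy inequality for each factor from Theorem~\ref{Th2} by computing $\tau_p=\frac{p-1}{\beta+p-1}\,y$ in normal coordinates adapted to the characteristics, and iterate through the weights $\beta=0,p,\dots,(2k-1)p$ to accumulate the product constant. The only cosmetic differences are your choice of normal coordinates, the grouping $\square^k=Y^kX^k$ instead of $(YX)^k$ (immaterial, since the weighted lemma applies to either field at every weight), and the sharpness step, where you reduce to the known optimal one-dimensional order-$2k$ constant via the product test functions $\chi(x/R)\varphi(y)$ rather than inserting the explicit near-optimizer $y^{(2kp-1)/p+\epsilon}\rho_\epsilon$ as the paper does; both arguments are standard and succeed.
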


This is an easy corollary of the following lemma.

\begin{lemma}
Let $u\in C^\infty_c (\mathbb{R}^2_+)$. Then the inequality $$\int_{\mathbb{R}^2_+} \frac{|(\partial_x \pm \partial_y)u|^p}{y^\gamma} dxdy \geq \bigg( \frac{\gamma +p -1}{p} \bigg)^p \int_{\mathbb{R}^2_+} \frac{|u|^p}{y^{\gamma+p}} dxdy$$ holds for all $\gamma > 1-p$.
\end{lemma}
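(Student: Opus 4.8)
The plan is to read the claimed estimate as a single instance of Theorem~\ref{Th2}, applied to the directed volume space $(\mathbb{R}^2_+, X, \omega)$ with
$$X = \partial_x + \partial_y, \qquad \omega = y^{-\gamma}\, dx \wedge dy.$$
With these choices the left-hand side of the lemma is exactly $\int_{\mathbb{R}^2_+} |Xu|^p\, \omega$, while the weighted term on the right is $\int_{\mathbb{R}^2_+} |u|^p y^{-p}\, \omega$. Thus the whole statement reduces to computing the temporal/volumetric distance $\tau_p$ of this space and checking that $\tau_p = \frac{p-1}{\gamma+p-1}\, y$. Substituting this value into the inequality (\ref{H2}) of Theorem~\ref{Th2} converts the constant $\big(\frac{p-1}{p}\big)^p$ into $\big(\frac{\gamma+p-1}{p}\big)^p$ and the weight $\tau_p^{-p}$ into $\big(\frac{\gamma+p-1}{p-1}\big)^p\, y^{-p}$, yielding precisely the asserted estimate; the $-$ case will be reduced to the $+$ case by symmetry at the end.

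First I would produce a global set of normal coordinates. Since $Xy = 1$ and $X(y-x) = 0$, the functions $t = y$ and $s = y - x$ satisfy $\partial_t = X$ and together define a diffeomorphism $\mathbb{R}^2_+ \to (0,\infty)\times\mathbb{R}$ (with inverse $(t,s)\mapsto(t-s,\,t)$), so a single saturated normal chart covers the entire space and the atlas-patching step of Theorem~\ref{Th2} is unnecessary. A direct computation gives $dx\wedge dy = dt \wedge ds$, hence $\omega = t^{-\gamma}\, dt\wedge ds$, so the local volume density is $\Omega(t,s) = t^{-\gamma}$, independent of $s$; the integral curves are the rays of slope $1$, each carrying parameter interval $I_s = (0,\infty)$, the endpoint $t\to 0$ corresponding to the boundary $y=0$ and $t\to\infty$ to infinity.

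Next I would verify $p$-traceability and evaluate $\tau_p$. Here $\Omega^{-1/(p-1)} = \xi^{\gamma/(p-1)}$, so the defining quantity is
$$\int_{\partial I_s}^t \xi^{\frac{\gamma}{p-1}}\, d\xi = \min\bigg(\int_0^t \xi^{\frac{\gamma}{p-1}}\, d\xi,\ \int_t^\infty \xi^{\frac{\gamma}{p-1}}\, d\xi\bigg).$$
The integral toward $0$ converges exactly when $\gamma/(p-1) > -1$, i.e.\ when $\gamma > 1-p$ --- which is precisely the hypothesis --- while the integral toward $\infty$ then diverges, so the minimum is realised by the first integral and equals $\frac{p-1}{\gamma+p-1}\, t^{(\gamma+p-1)/(p-1)}$. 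Multiplying by $\Omega^{1/(p-1)} = t^{-\gamma/(p-1)}$ collapses the exponents to $1$ and gives $\tau_p = \frac{p-1}{\gamma+p-1}\, t = \frac{p-1}{\gamma+p-1}\, y$, as required. The $-$ case then follows by applying the result to $\partial_x + \partial_y$ after the volume-preserving reflection $x\mapsto -x$, which carries $\partial_x - \partial_y$ to $-(\partial_x+\partial_y)$ and leaves both $|{\cdot}|^p$ and $\omega$ invariant.

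I do not anticipate a genuine obstacle, since the computation is completely explicit; the substance of the lemma lies entirely in the choice of $X$ and $\omega$. The one point demanding care is the interplay between the hypothesis $\gamma > 1-p$ and $p$-traceability: one must confirm that it is the endpoint $y=0$ (rather than $y=\infty$) that supplies the finite ``distance,'' so that the relevant integral is $\int_0^t$ and $\tau_p$ scales linearly in $y$. A second, routine verification is that every $u \in C^\infty_c(\mathbb{R}^2_+)$ restricts along each integral curve to a function in $C^1_c((0,\infty))$ --- its support being bounded away from both $y=0$ and $y=\infty$ --- so that the one-dimensional inequality $(\star)$ underlying Theorem~\ref{Th2} is legitimately applicable.
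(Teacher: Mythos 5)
Your proposal is correct and follows essentially the same route as the paper: apply Theorem \ref{Th2} to the directed volume space $(\mathbb{R}^2_+,\,\partial_x+\partial_y,\,y^{-\gamma}dx\wedge dy)$ and compute $\tau_p=\frac{p-1}{\gamma+p-1}\,y$, with the minus case handled by symmetry. The only difference is your choice of normal coordinates $(t,s)=(y,\,y-x)$ instead of the paper's $(\tfrac{x+y}{2},\tfrac{y-x}{2})$, which makes the volume density $t^{-\gamma}$ independent of $s$ and slightly streamlines the integral, but is immaterial by the coordinate-independence established in Proposition 4.3.
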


\begin{proof}
Consider the case of $X:=(\partial_x+\partial_y)$. The coordinates $$t=\frac{1}{2}(x+y),\ s=\frac{1}{2}(y-x)$$ are a set of normal coordinates for $(\mathbb{R}^2_+,X)$ (it can be easily verified that $X=\partial/\partial t$). Moreover, we have that $x=t-s$ and $y=t+s$, thus $$dx=dt-ds,\ dy=dt+ds.$$ It follows that $dx\wedge dy = 2 dt\wedge ds$. It follows that $$\omega = \frac{1}{y^\gamma} dx\wedge dy=\frac{2}{(t+s)^\gamma} dt\wedge ds,$$ and the corresponding temporal/volumetric distance is $$\tau_p=(t+s)^{-\frac{\gamma}{p-1}} \int_{\partial I_s}^t (\xi+s)^{\frac{\gamma}{p-1}} d\xi,$$ where $I_s=(-s,\infty)$. By elementary calculations, this is equal to $$\tau_p = \frac{p-1}{\gamma+p-1} (t+s) = \frac{p-1}{\gamma+p-1} y$$ and the result follows.

The case of $(\partial_x-\partial_y)$ is entirely analogous.
\end{proof}

The inequality in the theorem follows from the fact that $\square = (\partial_x+\partial_y)(\partial_x-\partial_y)$ and inductive application of the lemma. Sharpness is proved by a standard argument, substituting the sequence $$u_\epsilon (x,y)= y^{\frac{2kp-1}{p}+\epsilon} \rho_\epsilon (x,y),\ \ \ \epsilon \rightarrow 0,$$ where $\rho_\epsilon$ is a suitable cutoff function that is equal to $\rho_\epsilon=1$ in $(-\epsilon,\epsilon) \times (\epsilon,1/\epsilon)$ and $\textrm{supp}(\rho_\epsilon) \subset (-2 \epsilon,2 \epsilon) \times (\epsilon/2,2/\epsilon)$.

\appendix
\section{Auxiliary Material}

We give some auxiliary results from the theory of differentiable manifolds that are used throughout our work. All of them can be found in \cite{L}.

Let $F:M \rightarrow N$ be a smooth map between manifolds. As usual, the differential of $F$ is defined to be the map $F_*: TM \rightarrow TN$ such that $F_*X[g]=X[g \circ F]$ for all $g\in C^\infty(N)$. Likewise, we define the pull-back of $F$ as the map $F^*: \Lambda(T^*N) \rightarrow \Lambda(T^*M)$ by $F^*\omega(X_1,\ldots,X_k) = \omega(F_* X_1,\ldots,F_* X_k)$ for all vectors $X_1,\ldots,X_k \in T_zM$ for all $z\in M$.

\begin{lemma}[Diffeomorphic invariance of the integral]
Let $F:N \rightarrow M$ be an orientation-preserving diffeomorphism and $\omega \in \Lambda^{top}(T^*M)$. Then $$\int_M \omega = \int_N F^* \omega.$$
\end{lemma}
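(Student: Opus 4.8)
The plan is to recognize this lemma as the manifold version of the classical change-of-variables formula for multiple integrals, and to reduce it to that formula by a partition-of-unity argument together with careful orientation bookkeeping. Recall that the integral $\int_M \omega$ of a (compactly supported) top-form is defined by choosing a positively oriented atlas together with a subordinate partition of unity, writing each localized piece in coordinates as $f\, dx^1 \wedge \cdots \wedge dx^n$, and integrating $f \circ \varphi^{-1}$ over the coordinate image in $\mathbb{R}^n$. The orientation-preserving hypothesis on $F$ is precisely what guarantees that no stray sign appears, and isolating where it is used is the only genuinely delicate point.

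First I would reduce to forms supported in a single chart. Fix a positively oriented atlas $\{(U_i,\varphi_i)\}$ of $M$ with subordinate partition of unity $\{\rho_i\}$, so that by linearity of the integral $\int_M \omega = \sum_i \int_M \rho_i\omega$. On the $N$ side, since $F$ is a diffeomorphism the family $\{\rho_i \circ F\}$ is a partition of unity on $N$, and the pullback is multiplicative on a function times a form, so $F^*\omega = \sum_i (\rho_i \circ F)\,F^*\omega = \sum_i F^*(\rho_i\omega)$ and hence $\int_N F^*\omega = \sum_i \int_N F^*(\rho_i\omega)$. It therefore suffices to prove the identity for a single form $\omega$ compactly supported in one oriented chart $(U,\varphi)$ of $M$.

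For the local step I would exploit a chart on $N$ adapted to $F$. Since $F$ restricts to a diffeomorphism $F^{-1}(U) \to U$, the map $\eta = \varphi \circ F$ is a chart on $N$, and because $(U,\varphi)$ is positively oriented and $F$ is orientation-preserving, $\eta$ is a positively oriented chart of $N$; this is the one place the hypothesis is invoked. Writing $\omega = f\, dx^1 \wedge \cdots \wedge dx^n$, the definition of the pullback together with $F^* \circ d = d \circ F^*$ gives $F^*(dx^i) = d(x^i \circ F) = dy^i$, where $y^i$ are the coordinate functions of $\eta$, so that $F^*\omega = (f \circ F)\, dy^1 \wedge \cdots \wedge dy^n$. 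Computing both integrals in these coordinates and using $\eta(F^{-1}(U)) = \varphi(U)$ and $(f\circ F)\circ \eta^{-1} = f \circ \varphi^{-1}$ yields $\int_N F^*\omega = \int_{\varphi(U)} f\circ\varphi^{-1} = \int_M \omega$, completing the local case.

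I expect the main obstacle to be orientation bookkeeping rather than any analytic difficulty: one must verify that $\eta = \varphi\circ F$ is compatible with the chosen oriented atlas of $N$, so that the chart formula for $\int_N$ carries no sign. Equivalently, had I used an arbitrary positively oriented chart $(V,\eta)$ on $N$ in place of the adapted one, the wedge of the pulled-back differentials would produce the Jacobian determinant of $G = \varphi\circ F\circ\eta^{-1}$, and closing the argument would require the classical change-of-variables theorem together with $\det DG > 0$, which again holds precisely because $F$ is orientation-preserving; in the orientation-reversing case the identity would instead read $\int_M\omega = -\int_N F^*\omega$. I would also record at the outset that the integrals are assumed convergent (e.g. $\omega$ compactly supported, or $M$ compact), so that the finite partition-of-unity manipulations are justified.
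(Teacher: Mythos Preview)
Your argument is correct and is essentially the standard textbook proof via partition of unity and reduction to the Euclidean change-of-variables formula. Note, however, that the paper does not supply its own proof of this lemma: it is listed in the Appendix as auxiliary material and simply referenced to Lee's \textit{Introduction to Smooth Manifolds}, so there is no ``paper's proof'' to compare against beyond observing that your approach is exactly the one found in that reference.
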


\begin{lemma}[Integration over parametrisations]
Let $M$ be an oriented manifold of dimension $N$ and let $\omega \in \Lambda^N(TM)$ be a compactly supported top-form on $M$. Suppose $D_1,\ldots,D_k$ are open domains of integration in $\mathbb{R}^N$, and for $i=1,\ldots,k$ we are given smooth maps $\zeta_i: \bar{D}_i \rightarrow M$ satisfying
\begin{enumerate}
\item $\zeta_i$ restricts to an orientation-preserving diffeomorphism from $\bar{D}_i$ onto an open set $W_i \subset M$.
\item $W_i \cap W_j = \varnothing$ for $i \neq j$.
\item $\textrm{supp}(\omega) \subset \bar{W}_1 \cup \cdots \cup \bar{W}_k$.
\end{enumerate}
Then $$\int_M \omega = \sum_{i=1}^k \int_{D_i} \zeta_i^* \omega.$$
\end{lemma}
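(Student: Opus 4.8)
The plan is to reduce the global integral $\int_M \omega$ to a sum of integrals over the disjoint open pieces $W_i$ and then invoke Lemma A.1 (diffeomorphic invariance) on each piece separately. The only genuine difficulty is that hypothesis (3) controls $\textrm{supp}(\omega)$ only through the \emph{closures} $\bar{W}_i$ rather than the open sets themselves, so the core of the argument will be showing that the boundary portions $\bar{W}_i \setminus W_i$ contribute nothing.

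First I would establish that each boundary set $\bar{W}_i \setminus W_i$ is null in $M$. Since $D_i$ is a domain of integration, its topological boundary $\partial D_i$ has $N$-dimensional measure zero; because $\zeta_i$ is smooth on the compact set $\bar{D}_i$ it is Lipschitz there and hence carries measure-zero sets to measure-zero sets. Using continuity of $\zeta_i$ one checks that $\bar{W}_i = \zeta_i(\bar{D}_i)$, from which $\bar{W}_i \setminus W_i \subseteq \zeta_i(\partial D_i)$, a null set.

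Next, writing $W = \bigcup_i W_i$, which is a disjoint union of open sets by hypothesis (2), I would observe that $\bigl( \bigcup_i \bar{W}_i \bigr) \setminus W$ is contained in $\bigcup_i (\bar{W}_i \setminus W_i)$, hence is null. Since $\textrm{supp}(\omega) \subseteq \bigcup_i \bar{W}_i$, the integral of $\omega$ over $M$ equals its integral over $\bigcup_i \bar{W}_i$, and this in turn equals $\int_W \omega$ because the two regions differ by a null set. Disjointness then gives $\int_W \omega = \sum_i \int_{W_i} \omega$.

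Finally, on each piece the restriction $\zeta_i \colon D_i \to W_i$ is an orientation-preserving diffeomorphism, so Lemma A.1 yields $\int_{W_i} \omega = \int_{D_i} \zeta_i^* \omega$; both integrals converge absolutely, since $\omega$ is bounded with compact support and $\zeta_i^* \omega$ extends continuously to the compact set $\bar{D}_i$. Summing over $i$ gives the claim. The hard part will be the measure-zero bookkeeping of the middle steps, i.e. verifying that the boundary overlaps genuinely vanish and that confining the support only to the closures does not obstruct the disjoint decomposition; after that, the result is a direct appeal to Lemma A.1.
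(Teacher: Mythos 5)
The paper does not actually prove this lemma: it is quoted verbatim from Lee's \emph{Introduction to Smooth Manifolds} (reference [L], where it appears as Proposition~16.8) and is cited as background, so there is no in-paper argument to compare against. Your proof is essentially the standard one and it is correct. The decomposition $\int_M \omega = \sum_i \int_{W_i}\omega$ rests exactly on the two facts you isolate: $\bar W_i = \zeta_i(\bar D_i)$, whence $\bar W_i \setminus W_i \subseteq \zeta_i(\partial D_i)$ is null (a smooth map between manifolds of equal dimension carries null sets to null sets, and $\partial D_i$ is null by definition of a domain of integration), together with disjointness of the $W_i$. Two pieces of bookkeeping deserve to be made explicit. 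First, the ``integral of $\omega$ over a measurable subset $S\subseteq M$'' needs a meaning beyond the partition-of-unity definition of $\int_M\omega$ for compactly supported forms; the clean route is to cover $\textrm{supp}(\omega)$ by finitely many oriented charts and reduce everything to Lebesgue integrals, where discarding null sets and finite additivity over disjoint sets are available. Second, Lemma~A.1 as stated applies to compactly supported top forms, while $\omega|_{W_i}$ need not be compactly supported in the open set $W_i$; what you really invoke on each piece is the measure-theoretic change-of-variables formula for the diffeomorphism $\zeta_i\colon D_i\to W_i$ applied to an integrable density, which is legitimate since, as you observe, $\zeta_i^*\omega$ extends continuously to the compact set $\bar D_i$ and $\omega$ itself is integrable. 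With these caveats spelled out, the argument is complete.
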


\paragraph*{\textbf{Acknowledgement.}} Special thanks are owed to my PhD supervisor, Professor G. Barbatis, for the time he spent reviewing the article and offering useful suggestions. This research was supported by the Hellenic Foundation for Research and Innovation (HFRI) under the HFRI PhD Fellowship grant (Fellowship Number 1250).



\begin{thebibliography}{20}

\bibitem {AM} Avkhadiev, F.G., Makarov, R.V. Hardy Type Inequalities on Domains with Convex Complement and Uncertainty Principle of Heisenberg. Lobachevskii J Math 40, 1250–1259 (2019).

\bibitem {BEL} Balinsky, Alexander A, Evans, W Desmond, Lewis, Roger T, The Analysis and Geometry of Hardy's Inequality (2015), Springer International Publishing.

\bibitem {BFT} Barbatis, Gerassimos \& Filippas, Stathis \& Tertikas, Achilles. (2003). Tertikas A unified approach to improved L p Hardy inequalities with best constants. Transactions of the American Mathematical Society.

\bibitem {BAGG} Elvise Berchio, Lorenzo D'Ambrosio, Debdip Ganguly, Gabriele Grillo,
Improved Lp-Poincaré inequalities on the hyperbolic space, Nonlinear Analysis, Volume 157, 2017, Pages 146-166.

\bibitem {CR} F. Chiacchio and T. Ricciardi, Some sharp Hardy inequalities on spherically symmetric domains. Pac. J. Math. 242 No. 1 (2009), 173-187.

\bibitem {DD} Lorenzo D'Ambrosio, Serena Dipierro, Hardy inequalities on Riemannian manifolds and applications, Annales de l'Institut Henri Poincare (C) Non Linear Analysis, Volume 31, Issue 3, 2014, Pages 449-475.

\bibitem {GPP} D. Goel, Y. Pinchover, and G. Psaradkis, On weighted Lp-Hardy inequality on domains in Rn, to appear in a special issue dedicated to Shmuel Agmon, Pure Appl. Funct. Anal., arXiv: 2012.12860

\bibitem {KO} Kombe, Ismail, and Murad Özaydin. “Improved Hardy and Rellich inequalities on Riemannian manifolds.” Transactions of the American Mathematical Society, vol. 361, no. 12, 2009, pp. 6191–6203. JSTOR, www.jstor.org/stable/40590795. Accessed 19 Apr. 2021.

\bibitem {K} Alexandru Krist\'aly, Sharp uncertainty principles on Riemannian manifolds: the influence of curvature, Journal de Mathématiques Pures et Appliquées, Volume 119, 2018, Pages 326-346.

\bibitem {L} Lee, John M., Introduction to Smooth Manifolds, Second Edition, Springer Science+Business Media New York, 2003.

\bibitem {MMP} M. Marcus, V. J. Mizel and Y. Pinchover, On the best constant for Hardy’s inequality in $\mathbb{R}^n$, Trans. Amer. Math. Soc. 350 (1998), 3237-3255.

\bibitem {SP} Sun, X., Pan, F. Hardy type inequalities on the sphere. J Inequal Appl 2017, 148 (2017).

\end{thebibliography}
\end{document}